\numberwithin{equation}{section}
\newcommand{\field}[1]{\mathbb{#1}}
\newcommand{\Z}{\field{Z}}
\newcommand{\R}{\field{R}}
\newcommand{\C}{\field{C}}
 \def\cC{\mathscr{C}}
\def\mH{\mathcal{H}}
\def\mI{\mathcal{I}}
\def\mJ{\mathcal{J}}
\def\mQ{\mathcal{Q}}
\def\mT{\mathcal{T}}
\newcommand{\db}{\overline\partial}
\DeclareMathOperator{\End}{End}
\DeclareMathOperator{\Hom}{Hom}
\DeclareMathOperator{\Id}{Id}
\DeclareMathOperator{\dive}{div}
\newtheorem{thm}{Theorem}[section]
\newtheorem{lemma}[thm]{Lemma}
\newtheorem{prop}[thm]{Proposition}
\newtheorem{qtn}[thm]{Question}
\newtheorem{conj}[thm]{Conjecture}
\theoremstyle{definition}
\newtheorem{rem}[thm]{Remark}
\newtheorem{exa}[thm]{Example}
\theoremstyle{definition}
\newtheorem{defn}[thm]{Definition}
\newcommand{\be}{\begin{eqnarray}}
\newcommand{\ee}{\end{eqnarray}}
\newcommand{\ov}{\overline}
\newcommand{\wi}{\widetilde}
\newcommand{\comment}[1]{}
\renewcommand{\d}{\partial}
\begin{document}

\title{Characteristic Laplacian in sub-Riemannian geometry}

%\date{current }
\date{\today}
%   Information for first author
\author{Jeremy Daniel and Xiaonan Ma}

\address{Universit{\'e} Paris Diderot - Paris 7,
UFR de Math{\'e}matiques, Case 7012,
75205 Paris Cedex 13, France}
\email{danielj@math.jussieu.fr, ma@math.jussieu.fr}

%    Information for second author

\begin{abstract} We study a Laplacian operator 
    related to the characteristic cohomology of a smooth manifold 
    endowed with a distribution. We prove that this Laplacian 
    does not behave very well: it is not hypoelliptic in general 
    and does not respect the bigrading on forms in 
    a complex setting. We also discuss the consequences 
    of these negative results for a conjecture of 
    P. Griffiths, concerning the characteristic cohomology 
    of period domains.
\end{abstract}
\maketitle

\setcounter{section}{-1}
%%%%%%%%%%%%%%%%%%%%%%%%%%%%%%%%%
\section{Introduction}
%%%%%%%%%%%%%%%%%%%%%%%%%%%%%%%%%%%
Let $X$ be a smooth manifold and denote by $\Omega^\bullet(X)$
its differential graded algebra of smooth differential forms. 
Given a constant-rank distribution $W$ on $X$, we consider the 
\emph{Pfaffian system} associated to $W$, that is the graded 
differential ideal $\mJ^\bullet$ generated by the smooth global 
sections of the annihilator of $W$ in $T^\ast X$. Pfaffian systems 
constitute an important class of \emph{exterior differential systems}, 
for which we refer the interested reader to \cite{EDS}.

For a differential map $f$ from a smooth manifold $Y$ to $X$, 
it is equivalent for the differential of $f$ to have its values 
in the distribution $W$ and for the pullback by $f$ of any form in 
$\mJ$ to vanish on $Y$; we call such maps \emph{solutions} of 
the Pfaffian system. Hence, it is reasonable to consider the complex 
$\Omega^\bullet(X)/\mJ^\bullet$, endowed with the differential 
induced by exterior differentiation on $\Omega^\bullet(X)$. 
This is well defined since $\mJ$ is stable by exterior differentiation. 
We define the \emph{characteristic cohomology} of $(X,W)$ to 
be the cohomology of this complex and we denote it by 
$H^\bullet_\mJ(X)$. More generally, we will attach the adjective 
\emph{characteristic} to the constructions related to the Pfaffian 
system. In \cite{BrGr1} and \cite{BrGr2}, this characteristic 
cohomology is intensively studied. Remark that if 
$f: Y \rightarrow X$ is a solution of $\mJ$, then one has a map 
in cohomology
$$
f^\ast : H_\mJ^\bullet(X) \rightarrow H^\bullet(Y).
$$

In \cite[\S III]{GrVHS}, the characteristic cohomology of period domains
is the object of an interesting conjecture. Period domains $D$ 
are homogeneous spaces $G/V$, where $V$ is 
a compact subgroup of a Lie group $G$, encountered 
in the study of variations of Hodge structures. 
To every variation of Hodge structures over a complex manifold $S$, 
one can construct a holomorphic map from $S$ to a quotient 
$\Gamma\backslash D$, where $\Gamma$ is a discrete subgroup 
of $G$, and the well-known \emph{Griffiths transversality condition} 
(a.k.a \emph{infinitesimal period relation}, see e.g. \cite{GrVHS2})
states that the differential of such a map has its values in 
some distribution $W$ of $\Gamma\backslash D$, coming from a
 $G$-invariant distribution of $D$. 
The conjecture can be stated as follows:

\begin{conj}\label{conjGr}
Let $\Gamma$ be a cocompact subgroup of $G$, acting freely on 
$D = G/V$. If the distribution $W$ is bracket-generating and if 
$m \leq m_0$, where $m_0$ is some integer determined by the 
Pfaffian system $\mJ$ associated to $W$, 
then $H_\mJ^m(\Gamma\backslash D)$ carries a natural 
pure real Hodge structure of weight $m$.
\end{conj}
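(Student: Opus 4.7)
The natural strategy is to set up a Hodge theory for the characteristic complex $(\Omega^\bullet(\Gamma\backslash D)/\mJ^\bullet, d)$, by analogy with the classical proof of the Hodge decomposition on compact Kähler manifolds. The plan is to introduce a $G$-invariant Hermitian metric on $D$, which then descends to $\Gamma\backslash D$ and induces Hermitian structures on each quotient $\Omega^k/\mJ^k$. Taking formal adjoints of the induced differential $\bar d$, one defines a characteristic Laplacian $\Delta_\mJ = \bar d\bar d^\ast+\bar d^\ast \bar d$ acting on sections of the characteristic quotient bundle. Compactness of $\Gamma\backslash D$ would then be used to reduce the conjecture to three analytic claims: (i) $\Delta_\mJ$ has a discrete spectrum; (ii) its kernel is isomorphic to $H^m_\mJ(\Gamma\backslash D)$; (iii) this kernel inherits the bigrading coming from the complex structure on $D$.

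The first step is to describe the characteristic ideal $\mJ$ homogeneously. Since $W$ comes from a $G$-invariant distribution on $D=G/V$, the associated annihilator decomposes in terms of root spaces for a Cartan subgroup of $V$, and the Hodge bigrading on $\Omega^\bullet(D)$ restricts to a bigrading on $\mJ$ and hence on the quotient. I would first verify, using the Griffiths transversality built into the definition of period domains, that exterior differentiation modulo $\mJ$ respects this bigrading by shifting it in a controlled way — ideally splitting $\bar d=\d'+\d''$ with $(\d'')^2$ vanishing modulo $\mJ$, so that $H^\bullet_\mJ$ is the abutment of a spectral sequence starting from Dolbeault-type characteristic cohomology.

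Next, one would attempt to establish an analogue of the Kähler identities and in particular a bound of the form $\Delta_\mJ \geq c(\Delta_{\d'} + \Delta_{\d''})$, which would yield both hypoellipticity of $\Delta_\mJ$ (via subelliptic estimates of Hörmander type — this is where the hypothesis that $W$ is bracket-generating is expected to enter, through the sums-of-squares structure of the subprincipal symbol) and the fact that $\Delta_\mJ$ preserves the $(p,q)$-bigrading. Granted these, standard functional analysis on the compact manifold $\Gamma\backslash D$ produces finite-dimensional harmonic spaces in each bidegree summing to $H^m_\mJ$, and $G$-invariance of the metric together with complex conjugation provides the $(p,q)\leftrightarrow(q,p)$ symmetry needed to upgrade the decomposition to a pure real Hodge structure of weight $m$. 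The restriction $m\le m_0$ is expected to arise as the threshold below which the relevant spectral sequence collapses at the first page.

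The decisive obstacle is the analytic step just sketched. The quotient $\Omega^\bullet/\mJ^\bullet$ is \emph{not} a complex of sections of a bundle on which one has a standard subelliptic calculus, because the symbol of $\bar d$ involves only covectors in $W^\ast$ and the transversality relations cut down the natural directions in a way that need not leave behind a Hörmander-bracket-generating family at the symbol level. Consequently the putative inequality $\Delta_\mJ\geq c(\Delta_{\d'}+\Delta_{\d''})$ — together with the preservation of the bigrading — cannot be taken for granted, and must be checked by a direct computation in the homogeneous model before attempting to globalize. I expect this to be the main difficulty, and indeed the results announced in the abstract suggest that it is exactly here that the naive approach breaks down.
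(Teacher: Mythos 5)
This statement is a conjecture of Griffiths; the paper does not prove it, and your proposal does not either --- what you have written is an outline of the natural Hodge-theoretic strategy followed by a (correct) warning that its key analytic step may fail. The paper's entire content is devoted to showing that it \emph{does} fail, at both of the places you single out. First, hypoellipticity of the characteristic Laplacian $\Delta_\mQ$ cannot be obtained from a H\"{o}rmander sums-of-squares argument: the principal symbol computed in Proposition \ref{propprinc} is $\pi_{F_{\varphi,\bot}}(|\xi_W|^2 - i_{\xi_W^\ast}\pi_{F_\varphi}\xi_W\wedge)\pi_{F_{\varphi,\bot}}$, and the second term destroys the componentwise sums-of-squares structure (Example \ref{exataylorb}). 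Worse, Example \ref{exataylorc} exhibits explicit non-smooth harmonic $2$-forms on compact quotients of the complex Heisenberg group --- which are precisely the simplest period domains with nontrivial transversality --- so hypoellipticity genuinely fails and the identification of harmonic forms with $H^\bullet_\mJ$ in Conjecture \ref{conj1} cannot be established this way. Second, Theorem \ref{thmprin} shows that $\Delta_\mQ$ \emph{never} preserves the bigrading when $W$ is not involutive; since a bracket-generating distribution is by definition non-involutive, the bigrading on harmonic forms that your strategy requires is unavailable in exactly the situation of the conjecture. The obstruction is visible in the symbol of $[\partial_\mQ,\db^*_\mQ]$ computed in Lemma \ref{cct2.6}: it contains the term $(\xi,\pi_N[w_j,w_k])$, which vanishes for all $\xi$ annihilating $W$ only when $W$ is involutive (Remark \ref{cct2.8}).

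So the gap is not a missing lemma you could supply: the route you propose is provably closed, and the conjecture, if true, must be attacked by other means, as the authors themselves conclude. Your final paragraph correctly anticipates the obstruction, but anticipating it is not the same as overcoming it; as it stands you have neither a proof of the conjecture nor a reduction of it to a plausible open statement.
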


A real pure Hodge structure of weight $m$ is a 
real vector space $E$ whose complexification $E \otimes_\R \C$ carries 
a decomposition
\begin{align*}
E \otimes_\R \C = \bigoplus_{p+q = m} E^{p,q},
\end{align*}
where $E^{p,q}$ are complex vector spaces satisfying 
$E^{p,q} = \overline{E^{q,p}}$. The prototype of real pure 
Hodge structure of weight $m$ is the real cohomology 
in degree $m$ of a compact K\"{a}hler manifold, as results from 
Hodge theory (especially the ellipticity of the Hodge Laplacian) 
and K\"{a}hler identities (see \cite{GriffithsHarris} or \cite{MM07}
for instance). One idea to study this conjecture is to develop 
an analogue of Hodge theory in this \emph{characteristic situation}. 
More precisely, following \cite{GrGrKe10}, one defines 
a Laplacian related to the Pfaffian system and one can try to prove that there 
is an isomorphism between its harmonic forms and the 
characteristic cohomology. In the complex setting, if the Laplacian 
respects the bigrading on forms, we get a Hodge structure on 
harmonic forms, hence on characteristic cohomology. However, 
we will see that the picture is not so bright and that 
Conjecture \ref{conjGr} certainly needs to be studied in another way.\\

In the first part, we construct this \emph{characteristic Laplacian}. 
In \cite[\S III. A]{GrGrKe10}, it is asserted that this characteristic 
Laplacian is hypoelliptic and that in this case we have 
an isomorphism of the characteristic cohomology with the space 
of harmonic forms. However, we explain why there seems to be no 
reason for the hypoellipticity of the characteristic Laplacian
in general and, in Example \ref{exataylorc}, we 
give an explicit counterexample to the hypoellipticity in a complex setting:

\begin{prop}
One can construct compact complex manifolds of dimension $3$, endowed with a
contact structure, for which the corresponding characteristic Laplacian is \emph{not}
hypoelliptic in degree $2$.
\end{prop}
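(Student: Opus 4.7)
The plan is to realize the counterexample on a complex Heisenberg nilmanifold. Let $G = H_3(\C)$ be the complex Heisenberg group and $\Gamma \subset G$ a cocompact lattice (for instance, matrices with entries in the Gaussian integers), and set $X = \Gamma\backslash G$. Then $X$ is a compact complex $3$-manifold with a global left-invariant holomorphic coframe $\alpha_1, \alpha_2, \alpha_3$ satisfying the Maurer--Cartan relations $d\alpha_1 = d\alpha_2 = 0$ and $d\alpha_3 = -\alpha_1 \wedge \alpha_2$. The complex distribution $W = \ker \alpha_3 \cap \ker \bar\alpha_3$ is a holomorphic contact structure on $X$, with Pfaffian ideal $\mJ = \langle \alpha_3, \bar\alpha_3 \rangle$; the quotient complex $\Omega^\bullet(X)/\mJ^\bullet$ is freely generated over $C^\infty(X)$ by monomials in $\{\alpha_1, \alpha_2, \bar\alpha_1, \bar\alpha_2\}$, with degree-$2$ component of rank $6$.

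Using the flat left-invariant Hermitian metric making $\{\alpha_i, \bar\alpha_i\}$ orthonormal, I would compute $\Delta_\mJ$ in degree $2$ explicitly. Denote by $V_1, V_2, V_3, \bar V_1, \bar V_2, \bar V_3$ the dual left-invariant vector fields, whose only non-vanishing brackets are $[V_1, V_2] = V_3$ and $[\bar V_1, \bar V_2] = \bar V_3$. The essential feature is that reduction modulo $\mJ$ kills all contributions in the $\alpha_3, \bar\alpha_3$ directions, so $d_\mJ$, $d_\mJ^*$ and hence $\Delta_\mJ$ are matrix-valued differential operators involving only the four horizontal fields $V_1, V_2, \bar V_1, \bar V_2$; the vertical fields $V_3, \bar V_3$ never appear. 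A straightforward but somewhat lengthy calculation in the invariant frame then presents $\Delta_\mJ$ in degree $2$ as an explicit $6 \times 6$ matrix of second-order operators in these four horizontal fields.

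Next I would show that this matrix operator fails to be hypoelliptic. The pure scalar blocks on the extreme $W$-bigraded components $\Omega^{2,0}_W$ and $\Omega^{0,2}_W$ reduce to sub-Laplacians of the form $-(V_1 \bar V_1 + V_2 \bar V_2)$, which are hypoelliptic by H\"ormander's theorem applied to the bracket-generating set $\{V_1, V_2, \bar V_1, \bar V_2\}$. The failure must therefore come from the middle $\Omega^{1,1}_W$ block, whose matrix structure, together with the off-diagonal couplings to the extremes (since $\Delta_\mJ$ does not preserve the bigrading), produces a degenerate principal symbol in a characteristic direction transverse to the horizontal distribution. To make this explicit I would Fourier-decompose $L^2$-sections of the degree-$2$ bundle along the central $2$-torus of $X$, reducing $\Delta_\mJ$ to a family $\Delta_\lambda$ of matrix operators on the base parametrized by the central character $\lambda$; if this family admits eigenvalues that remain bounded as $|\lambda| \to \infty$, summing the corresponding eigenforms with slowly decaying Fourier coefficients produces a non-smooth element in the kernel of $\Delta_\mJ$, directly contradicting hypoellipticity.

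The main obstacle is confirming the degeneracy at the matrix level: because $\Delta_\mJ$ does not decouple along the bigrading, one must handle the full $6 \times 6$ system and verify that the off-diagonal couplings do not restore the missing H\"ormander condition. This degeneracy in degree $2$ is a matrix analogue of the well-known failure of the Kohn Laplacian $\Box_b$ on middle-degree $(0,q)$-forms of a strictly pseudoconvex CR manifold, and the Fourier analysis on the compact nilmanifold packages it into a globally defined singular solution.
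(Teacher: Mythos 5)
There is a genuine gap, and it begins with the complex you are working in. The characteristic Laplacian is built from the \emph{differential} ideal $\mJ$, not from the algebraic ideal $\mI$ generated by $\alpha_3,\overline{\alpha}_3$. In degree $2$ the differential ideal also contains $d\alpha_3=-\alpha_1\wedge\alpha_2$ and $d\overline{\alpha}_3=-\overline{\alpha}_1\wedge\overline{\alpha}_2$, and since the contact form is nondegenerate these span all of $\Lambda^2W^\ast$ and $\Lambda^2\overline{W}^\ast$. Hence $\mQ^2=\cC^\infty(X,W^\ast\widehat{\otimes}\overline{W}^\ast)$ has rank $4$, not $6$: the extreme pieces $\mQ^{2,0}$ and $\mQ^{0,2}$ vanish, and so does $\mQ^3$. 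Your $6\times 6$ system, and in particular the ``scalar blocks on the extreme bigraded components'' that you propose to handle by H\"ormander's theorem, do not exist in the correct complex. Moreover the vanishing of $\mQ^3$ is exactly what makes the counterexample tractable: on $\mQ^2$ one has $\Delta_\mQ=d_\mQ d_\mQ^\ast$, so a $2$-form $\nu=\sum_{i,j}\nu_{ij}\,\overline{\alpha}^j\wedge\alpha^i$ is harmonic if and only if $d_\mQ^\ast\nu=0$, a first-order divergence-type system with no second-order operator to analyze at all.

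The second problem is that your decisive step is conditional: ``if this family admits eigenvalues that remain bounded as $|\lambda|\to\infty$'' is precisely the degeneracy that has to be \emph{proved}, and the proposal does not prove it. The paper's argument bypasses the Fourier analysis entirely. On the complex Heisenberg group with a right-invariant Hermitian metric one chooses a right-invariant holomorphic orthonormal frame $(X_1,X_2)$ of $W$ with $\dive X_1=0$; taking the ansatz $\nu_{12}=\nu_{21}=\nu_{22}=0$, the coclosedness equations collapse to $X_1\nu_{11}=\overline{X}_1\nu_{11}=0$ (all the other equations are satisfied identically). Since $X_1$ is holomorphic, $[X_1,\overline{X}_1]=0$, so $\{X_1,\overline{X}_1\}$ spans an \emph{integrable} rank-$2$ distribution, and by Frobenius there exist non-smooth functions $\nu_{11}$ constant along its leaves; these give non-smooth harmonic $2$-forms, and everything descends to the compact quotient $\Gamma\backslash\mH^3$ by invariance. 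If you wish to salvage your route, you must first redo the computation on the correct rank-$4$ bundle and then actually exhibit the bounded spectral branch; but the structural point --- that in top characteristic degree harmonicity reduces to a first-order system seeing only a non-bracket-generating (indeed involutive) set of directions --- is the cleaner and complete mechanism.
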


In particular, it seems difficult to understand the characteristic cohomology \emph{via} harmonic forms.

In the second part, we nevertheless study the characteristic 
Laplacian in more details and answer the following question, 
asked in \cite[\S III. A]{GrGrKe10}:

\begin{qtn}\label{qtncond}
Let $(X,h)$ be a hermitian manifold, endowed with a holomorphic 
distribution $W$. Let $W_\R$ denote the underlying
real distribution in $TX$ and consider the Pfaffian system 
associated to $W_\R$. What are the necessary and sufficient 
conditions for the characteristic Laplacian to respect the 
bigrading on differential forms on $X$?
\end{qtn}
We give an unexpected answer to this question in Theorem \ref{thmprin}:

\begin{thm}
The characteristic Laplacian never respects the bigrading 
when the distribution $W$ is not involutive (in particular, when it is bracket-generating, as in Conjecture \ref{conjGr}).
\end{thm}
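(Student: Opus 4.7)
The plan is to reformulate the question as the vanishing of a single anticommutator on the quotient complex, to unfold this anticommutator in local unitary frames, and to show that it cannot vanish pointwise when the Frobenius obstruction of $W_\R$ is non-trivial.

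\textbf{Reduction to an anticommutator.} Since $W$ is a holomorphic subbundle of $T^{1,0}X$, the ideal $\mJ\otimes\C$ is generated by sections of $W^\perp\subset T^{*1,0}X$ and of its conjugate, so $\mJ$ is bigraded and the induced differential on $\Omega^\bullet(X,\C)/\mJ^\bullet$ splits as $D=D'+D''$ of bidegrees $(1,0)$ and $(0,1)$. Using $h$ to identify the quotient with the orthogonal complement $\mJ^\perp$ and writing $p$ for the orthogonal projection onto $\mJ^\perp$, one has $D'=p\partial|_{\mJ^\perp}$ and $D''^{*}=p\bar\partial^{*}|_{\mJ^\perp}$. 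Letting $N$ be the $(1,0)$-grading operator, the relations $[N,D]=D'$ and $[N,D^{*}]=-D'^{*}$ give
\begin{align*}
[N,\Delta]\;=\;\{D',D''^{*}\}\;-\;\{D'',D'^{*}\}.
\end{align*}
The two anticommutators live in distinct bidegrees $(1,-1)$ and $(-1,1)$ and are formal adjoints of each other, so $\Delta$ respects the bigrading if and only if $\{D',D''^{*}\}=0$.

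\textbf{Unfolding the anticommutator.} Using $p^{2}=p$, on $\mJ^\perp$ we get
\begin{align*}
\{D',D''^{*}\}\;=\;p\,\{\partial,\bar\partial^{*}\}\;-\;p\,\partial\,(1-p)\,\bar\partial^{*}\;-\;p\,\bar\partial^{*}\,(1-p)\,\partial.
\end{align*}
The first term depends only on $(X,h)$ and vanishes in the K\"ahler case; the other two measure how $\partial$ and $\bar\partial^{*}$ fail to preserve $\mJ^\perp$, i.e.\ the infinitesimal variation of the subbundle $W^\perp$. Fix a local unitary coframe $\theta^{1},\dots,\theta^{n}$ of $T^{*1,0}X$ with $\theta^{s+1},\dots,\theta^{n}$ a frame of $W^\perp$, and expand
\begin{align*}
\partial\theta^{i}\;=\;\sum_{j<k}a^{i}_{jk}\,\theta^{j}\wedge\theta^{k},\qquad \bar\partial\theta^{i}\;=\;\sum_{j,k}b^{i}_{j\bar k}\,\theta^{j}\wedge\bar\theta^{k}.
\end{align*}
Holomorphy of $W$ forces $b^{i}_{j\bar k}=0$ for $i>s$ and $j\leq s$ (after adjusting the frame), while non-involutivity of $W_\R$ at a point $x_{0}$ is equivalent to $a^{i_{0}}_{j_{0}k_{0}}(x_{0})\neq 0$ for some $i_{0}>s$ and $j_{0},k_{0}\leq s$.

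\textbf{Non-vanishing at a non-involutive point.} Fix such an $x_{0}$. Choose holomorphic normal coordinates adapted to $h$ at $x_{0}$, together with a unitary correction of the frame, so that at $x_{0}$ the accessory structure coefficients vanish and only the involutivity obstruction $a^{i_{0}}_{j_{0}k_{0}}(x_{0})$ survives among the $a^{i}_{jk}(x_{0})$. Evaluate $\{D',D''^{*}\}$ on a judiciously chosen test form $\alpha\in\mJ^\perp$ (for instance of bidegree $(0,1)$ or $(1,1)$ built from the indices $j_{0},k_{0}$); tracing the three contributions at $x_{0}$, the metric-torsion term $p\{\partial,\bar\partial^{*}\}\alpha$ and the frame-derivative contributions vanish, and the surviving projection-variation term is a non-zero scalar multiple of $\overline{a^{i_{0}}_{j_{0}k_{0}}}(x_{0})$. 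This contradicts $\{D',D''^{*}\}=0$ and proves the theorem. The main obstacle is precisely this last computation: a general Hermitian metric carries intrinsic torsion that a priori competes with the involutivity contribution, so the crux is to choose the frame and the test form so that the Frobenius obstruction of $W_\R$ is cleanly isolated from the metric torsion rather than being accidentally cancelled by it.
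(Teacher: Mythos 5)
Your reduction to the single anticommutator $\{D',D''^{*}\}=0$ is exactly the paper's first step (its \eqref{cc2.6}--\eqref{cc2.7}), and your decomposition of $\{D',D''^{*}\}$ into the intrinsic Hermitian term $p\{\partial,\bar\partial^{*}\}$ plus the two projection-variation terms matches the structure of the paper's computation. The gap is in your third step, which you yourself flag as ``the main obstacle'': you never actually isolate the Frobenius obstruction from the Hermitian torsion, and the mechanism you propose for doing so does not work. The term $p\{\partial,\bar\partial^{*}\}$ is governed by $\partial\Theta$ (via the generalized K\"ahler identity, $[\partial,\bar\partial^{*}]=[\partial,\overline{\mathcal T}^{*}]$ with $\mathcal T=[\Lambda,\partial\Theta]$), and $\partial\Theta$ is a \emph{tensor}: no choice of holomorphic normal coordinates or unitary frame correction can make it vanish at a point for a non-K\"ahler metric. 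Indeed, the paper's Theorem \ref{almt2.3} shows that $[\partial,\bar\partial^{*}]=0$ forces $\partial\Theta=0$, so for a general Hermitian $h$ this competing term is genuinely present at $x_{0}$ and of the same (first) order as the bracket contribution. Your claim that ``the metric-torsion term and the frame-derivative contributions vanish'' at $x_{0}$ after such a normalization is therefore unjustified, and the asserted conclusion that what survives is a nonzero multiple of $\overline{a^{i_{0}}_{j_{0}k_{0}}}(x_{0})$ is not established.

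The paper's way out is different and worth noting: instead of evaluating the operator on a test form at a point, it first shows that $[\partial_{\mQ},\bar\partial^{*}_{\mQ}]$ is a \emph{first-order} differential operator (the second-order symbol cancels by the identity $\pi_{\mQ}\, i_{\xi^{*}}\,\pi_{\mQ}^{\bot}\,\xi\wedge\pi_{\mQ}=0$ for $\xi\in W^{*}$), and then computes its first-order principal symbol $\sigma(\xi)$ explicitly (Lemma \ref{cct2.6}). Every term in that symbol except the bracket term $(\xi,\pi_{N}[w_{j},w_{k}])$ carries a factor of $\xi_{W}$, the projection of $\xi$ onto $W^{*}$ --- including the torsion term $(\xi_{W},T(w_{j},w_{k}))$ and the second-fundamental-form terms in $B,B^{*}$. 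Choosing $\xi$ to be a covector annihilating $W$ (so $\xi_{W}=0$) therefore kills all competitors at once, leaving only $\sum_{k}(\xi,\pi_{N}[w_{j},w_{k}])\,w^{k}$, which is nonzero at a non-involutive point. This is the clean isolation you were looking for; it happens at the level of the symbol in a covector direction transverse to $W^{*}$, not through a choice of coordinates, frame, or test form. To complete your argument you would need either to adopt this symbol computation or to supply an explicit pointwise evaluation in which the $\partial\Theta$-contribution demonstrably cancels --- and the latter is not available in general.
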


This is quite deceptive since it shows that there is nothing like a K\"{a}hler 
condition in the characteristic case. Indeed, in the classical case where the 
distribution $W$ is the whole space $TX$, we show in Theorem \ref{almt2.3}:

\begin{thm} A complex Hermitian manifold  is K\"{a}hler
if and only if the Hodge Laplacian preserves the bigrading 
on differential forms on $X$.
\end{thm}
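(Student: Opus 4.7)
The forward direction---Kähler implies bigrading-preservation---is classical. On a Kähler manifold the Kähler identities give $\Delta_d = 2\Delta_{\bar\partial}$, and $\Delta_{\bar\partial}$ visibly preserves the $(p,q)$-bigrading since both $\bar\partial$ and $\bar\partial^*$ shift only the antiholomorphic degree. I would simply quote this.

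For the converse, the plan is first to expand, on any Hermitian manifold,
\begin{align*}
\Delta_d = \Delta_\partial + \Delta_{\bar\partial} + \{\partial, \bar\partial^*\} + \{\bar\partial, \partial^*\},
\end{align*}
and classify the summands by bidegree: $\Delta_\partial$ and $\Delta_{\bar\partial}$ are of bidegree $(0,0)$, $\{\partial, \bar\partial^*\}$ has bidegree $(+1,-1)$, and $\{\bar\partial, \partial^*\}$ (its formal adjoint) has bidegree $(-1,+1)$. Since no cancellation is possible between operators of distinct bidegrees, the hypothesis that $\Delta_d$ preserves the bigrading forces
\begin{align*}
\{\partial, \bar\partial^*\} = 0 \qquad \text{and} \qquad \{\bar\partial, \partial^*\} = 0.
\end{align*}
The two conditions are complex conjugates of each other, so it will suffice to deduce $d\omega = 0$ from the first.

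The heart of the argument, and the main obstacle, is this implication $\{\partial, \bar\partial^*\} = 0 \Rightarrow d\omega = 0$. I would invoke the Hermitian generalization of the Kähler identities: on any Hermitian manifold there is a zeroth-order ``torsion'' operator $\tau = [\Lambda, e(\partial\omega)]$, which vanishes precisely when $\partial\omega = 0$, such that
\begin{align*}
[\Lambda, \partial] = i(\bar\partial^* + \bar\tau^*).
\end{align*}
Substituting $\bar\partial^* = -i[\Lambda, \partial] - \bar\tau^*$ into the relation $\{\partial, \bar\partial^*\} = 0$, the bracket $\{\partial, [\Lambda, \partial]\}$ vanishes for free by $\partial^2 = 0$, and the assumption collapses to $\{\partial, \bar\tau^*\} = 0$ as an identity of operators on $\Omega^\bullet(X)$. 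From here the goal is to conclude that the zeroth-order algebraic operator $\bar\tau^*$ itself vanishes, equivalently $\bar\partial\omega = 0$; by reality of $\omega$ this gives $d\omega = 0$, i.e.\ Kähler-ness. The delicate point is precisely this last reduction: a pure principal-symbol argument is not sufficient, since generic wedge-and-contract operators of bidegree $(0,-1)$ can anticommute with $\xi^{1,0}\wedge$ for every covector $\xi$. I would close the argument either by exploiting the specific wedge--contract shape $\bar\tau^* = [\iota_{\bar\partial\omega}, e(\omega)]$, or by working in holomorphic normal coordinates at a point $p$ and verifying directly that $\{\partial, \bar\tau^*\}(p) = 0$ forces the antisymmetric part $\partial_k h_{i\bar j}(p) - \partial_i h_{k\bar j}(p)$ of the first metric jet to vanish, which is the pointwise Kähler condition.
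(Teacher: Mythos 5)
Your proposal follows essentially the same route as the paper's proof: the same bidegree decomposition of $\Delta$, the same reduction via the generalized K\"ahler identity $[\Lambda,\partial]=\sqrt{-1}(\bar\partial^*+\overline{\mathcal{T}}^*)$ and the Jacobi identity to the vanishing of the torsion commutator, and you correctly flag the one delicate point (a bidegree-$(0,-1)$ wedge-and-contract operator can anticommute with every $\xi^{1,0}\wedge$ without vanishing). The paper closes by the first of your two suggested options: it inserts the explicit formula for $\mathcal{T}$ from \cite[Lemma 1.4.10]{MM07}, computes the principal symbol of the first-order operator $[\bar\partial^*,\mathcal{T}]$ via $[i_{\overline{w}_i},\mathcal{T}]=\sqrt{-1}\sum_{j,k}(\partial\Theta)(w_j,w_k,\overline{w}_i)\,w^k\wedge i_{\overline{w}_j}$, and reads off $\partial\Theta=0$ from its vanishing.
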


It seems that the necessity part was not written yet in the literature.
 This result and its proof are independant of the rest of the article.

In this paper, if $E$ is a complex vector bundle on a manifold $X$, 
we will denote by $E_{\R}$ its underlying real vector bundle.
Moreover, $\wedge$ and $i$ are the exterior and interior products 
on $\Omega(X)$ and all distributions are assumed to be of 
constant rank.\\

\paragraph{\textbf{Acknowledgments}}
We thank Professors R. Bryant and P. Griffiths for useful 
discussions and for sending us M. Taylor's unpublished work. 
X. Ma thanks the Institut Universitaire de France for support.

%%%%%%%%%%%%%%%%%%%%%%%%%%%%%%%%%
\section{The characteristic Laplacian}\label{s1}
%%%%%%%%%%%%%%%%%%%%%%%%%%%%%%%%%

This section is organized as follows. In Subsection \ref{s1.1}, 
we define the characteristic Laplacian 
associated with a distribution for  Riemannian manifolds.
In Subsection \ref{s1.2},  we explain Taylor's counterexample 
for the hypoellipticity of the  characteristic Laplacian.
In Subsection \ref{princsymb},  we explain why 
the hypoellipticity does not seem to hold in general 
by computing its principal symbol.
In Subsection \ref{subscompl},  we give a counterexample
for the hypoellipticity of the  characteristic Laplacian
in the complex setting which is the context of
the original question.

%%%%%%%%%%%%%%%%%%%%%%%%%%%%%%%
\subsection{Definitions and notations}\label{s1.1}

Let $(X,g^{TX})$ be a smooth compact Riemannian manifold, 
endowed with a (constant-rank) distribution $W$. 
We denote by $F$ the annihilator of $W$ in $T^\ast X$; 
it is a vector subbundle of $T^\ast X$. We denote by 
$\Omega^\bullet(X)$ the graded algebra of differential forms on 
$X$ and we endow it with the natural metric $g^{\Omega(X)}$
induced by $g^{TX}$. We consider
\begin{itemize}
 \item $\mI$ the algebraic ideal generated by the smooth sections $I_X=\cC^{\infty}(X, F)$;
 \item $\mJ$ the differential ideal generated by the smooth 
 sections of $F$ on $X$, that is the minimal algebraic ideal 
 containing the smooth sections of $F$ and stable by exterior 
 differentiation.
\end{itemize}

Remark that $\mJ$ is the algebraic ideal generated by 
$I_X$ and $dI_X$. If $(\theta_{j})$ is a frame of $F$, 
the forms in $\mI$ can locally be written as
$$
\sum_j \theta_j \wedge \phi_j 
$$
where $\phi_j$ are arbitrary forms on $X$, and those in $\mJ$ 
are of the form
\begin{align*} 
\sum_j \theta_j \wedge \phi_j 
+ d\theta_j \wedge \psi_j 
\end{align*}
where $\phi_j, \psi_j$ are arbitrary forms on $X$.

Let $\mQ$ be the orthogonal complement of $\mJ$ in $\Omega(X)$. 
Remark that $\mQ$ is naturally graded. We define a differential 
operator for forms in $\mQ$ by
\begin{align} \label{cc1.3}
d_\mQ := \pi_\mQ \circ d\circ \pi_\mQ : \mQ\to \mQ,
\end{align}
where $\pi_\mQ$ is the orthogonal projection from $\Omega(X)$ 
onto $\mQ$.
Since $\mJ$ is stable by $d$, we have
\begin{align} \label{cc1.5}
\pi_\mQ \circ d\circ \pi_\mQ = \pi_\mQ \circ d: 
\Omega^{\bullet}(X)\to \Omega^{\bullet}(X),
\end{align}
By \eqref{cc1.5}, we know
\begin{align} \label{cc1.6}
d_\mQ^2= \pi_\mQ \circ d\circ \pi_\mQ\circ d\circ \pi_\mQ 
= \pi_\mQ \circ d^2\circ \pi_\mQ =0.
\end{align}
Since $\mQ$ is the orthogonal complement of $\mJ$ and $\mJ$ 
is stable by $d$, we know that $\mQ$ is stable by the adjoint 
$d^\ast$ of $d$ and the restriction 
of $d^\ast$ to $\mQ$ is $d_\mQ^*$, the adjoint of $d_\mQ$ 
for the natural $L^2$-structure on $\mQ$. Indeed, if $\alpha$ 
(resp. $\beta$) is in $\mQ$ (resp. $\mJ$), then $d\beta$ 
is in $\mJ$ and this implies 
$$
(d^\ast\alpha,\beta) = (\alpha, d\beta) = 0.
$$
Since this is true for any $\beta$ in $\mJ$, $d^\ast \alpha$ is in 
$\mQ$. Moreover, if $\alpha$ and $\beta$ are in $\mQ$, then 
\begin{equation} \label{equadj}
(d^\ast \alpha, \beta) = (\alpha, d\beta) = (\alpha, d_\mQ\beta)
\end{equation}
and this shows that $d_\mQ^\ast \alpha = d^\ast \alpha$.

\begin{defn} \label{sublaplace}
 The \emph{characteristic Laplacian} $\Delta_\mQ$ on $X$ 
 with respect to $W$ is the differential operator on $\mQ$
\begin{align} \label{cc1.10}
\Delta_\mQ =
d_\mQ d_\mQ^\ast + d_\mQ^\ast d_\mQ : \mQ \to \mQ.
\end{align}
\end{defn}

\begin{rem}
In sub-Riemmanian geometry (where the distribution $W$ is involutive),
 one defines a sub-Laplacian on functions (see \cite{Mont02}). 
 This sub-Laplacian is 
hypoelliptic and coincides with the characteristic Laplacian in degree $0$. 
In Example \ref{exataylor}, we will see that 
hypoellipticity can fail in positive degrees.
\end{rem}

Remember that we defined the characteristic cohomology of $X$ 
(associated to the distribution $W$) to be
\begin{align}
H^\bullet_\mJ(X) := H^\bullet(\Omega^\bullet(X)/\mJ^\bullet,d),
\end{align}
with the differential induced by exterior differentiation on 
$\Omega^\bullet(X)$.

By definition of $\mQ$ and $d_Q$, this characteristic cohomology 
is naturally isomorphic to the cohomology of the complex 
$(\mQ^\bullet, d_Q)$. An analogue of Hodge theory would 
be the following conjecture.

\begin{conj} \label{conj1} We denote by $\mH_\mQ^\bullet(X)$ 
 the \emph{characteristic harmonic forms}, that is the kernel of 
$\Delta_\mQ$. Then
\begin{itemize}
\item $\mH_\mQ^\bullet(X)$ is of finite dimension.
\item There is an orthogonal decomposition
\begin{align}
\mQ^\bullet(X) = \mH_\mQ^\bullet(X) \oplus d_\mQ(\mQ^{\bullet-1}(X)) 
\oplus d^\ast_\mQ(\mQ^{\bullet+1}(X)).
\end{align}
\item The natural application $\mH_\mQ^\bullet(X) 
\rightarrow H^\bullet_\mJ(X)$ is an isomorphism.
\end{itemize}
\end{conj}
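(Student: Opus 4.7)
The plan is to imitate the classical Hodge-theoretic strategy, adapted to the subcomplex $(\mQ^\bullet, d_\mQ)$ on the compact Riemannian manifold $X$, so that everything reduces to analytic properties of the operator $\Delta_\mQ$ constructed in Definition \ref{sublaplace}. The crucial input one needs is a subelliptic (or elliptic) a priori estimate of the form
\begin{equation*}
\norm{u}_s \leq C\bigl(\norm{\Delta_\mQ u}_0 + \norm{u}_0\bigr), \quad u \in \mQ^\bullet,
\end{equation*}
for some $s > 0$ on each graded piece, yielding hypoellipticity of $\Delta_\mQ$. Granted such an estimate, standard arguments give the three conclusions in turn.

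First, for (i), hypoellipticity implies that every element of $\mH_\mQ^\bullet(X) = \Ker \Delta_\mQ$ is smooth; combined with Rellich's compactness of the embedding $H^s \hookrightarrow L^2$ on the compact manifold $X$, this forces $\mH_\mQ^\bullet(X)$ to be finite-dimensional. Second, for (ii), the estimate also yields that $\Delta_\mQ$ has closed range in $\mQ^\bullet$, so that the Fredholm alternative together with formal self-adjointness gives the orthogonal decomposition $\mQ^\bullet = \mH_\mQ^\bullet(X) \oplus \ran(\Delta_\mQ)$. Since $\Delta_\mQ = d_\mQ d_\mQ^\ast + d_\mQ^\ast d_\mQ$ and $d_\mQ^2 = 0 = (d_\mQ^\ast)^2$, the images $\ran(d_\mQ)$ and $\ran(d_\mQ^\ast)$ are mutually orthogonal (using \eqref{equadj}) and their sum is $\ran(\Delta_\mQ)$, giving the full three-term decomposition.

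Third, for (iii), a characteristic harmonic form $\alpha$ satisfies $d_\mQ \alpha = d_\mQ^\ast \alpha = 0$ by pairing $(\Delta_\mQ \alpha, \alpha) = 0$, so $\alpha$ defines a class in $H^\bullet_\mJ(X)$. Injectivity and surjectivity of the resulting map $\mH_\mQ^\bullet(X) \to H^\bullet_\mJ(X)$ come from the decomposition in (ii): if $\alpha \in \mQ$ is $d_\mQ$-closed and one writes $\alpha = \alpha_0 + d_\mQ \beta + d_\mQ^\ast \gamma$, applying $d_\mQ$ gives $d_\mQ d_\mQ^\ast \gamma = 0$, whence $\norm{d_\mQ^\ast \gamma}^2 = (d_\mQ d_\mQ^\ast \gamma, \gamma) = 0$, so $\alpha - \alpha_0 = d_\mQ \beta$ is exact; and injectivity reduces to showing that a harmonic exact form is zero, which follows by the same pairing argument.

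The main obstacle, and indeed the point where this plan breaks down, is the very first step: the subelliptic estimate and hypoellipticity of $\Delta_\mQ$. The introduction of this paper, together with Example \ref{exataylor} and the counterexample of Proposition 0.1, shows that $\Delta_\mQ$ is \emph{not} hypoelliptic in general, even on compact contact manifolds of complex dimension $3$. Consequently this direct strategy cannot prove Conjecture \ref{conj1} without further hypotheses; either one must impose restrictive geometric conditions on $W$ that do guarantee the subelliptic estimate on each $\mQ^k$, or one must replace the naive $d_\mQ d_\mQ^\ast + d_\mQ^\ast d_\mQ$ by a genuinely hypoelliptic substitute (in the spirit of Kohn's $\Box_b$ on CR manifolds, or Bismut's hypoelliptic Laplacians) whose kernel still computes $H^\bullet_\mJ(X)$.
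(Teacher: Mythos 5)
The statement you were asked to prove is Conjecture \ref{conj1}, which the paper itself does not prove --- on the contrary, the authors remark immediately after stating it that since $\Delta_\mQ$ fails to be hypoelliptic (Examples \ref{exataylor} and \ref{exataylorc}), ``we cannot really hope this conjecture to be true.'' Your proposal correctly reduces the three assertions to a subelliptic estimate for $\Delta_\mQ$ and correctly identifies that this estimate is precisely what fails, so your assessment coincides with the paper's own; there is no proof in the paper to compare against.
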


In classical Hodge theory, one gets these results as consequences 
of the ellipticity of the Laplacian. In the next section, we show that 
the characteristic Laplacian in not even hypoelliptic in general. 
Hence we cannot really hope this conjecture to be true.

%%%%%%%%%%%%%%%%%%%%%%%%%%%%%%%%%
\subsection{The question of hypoellipticity}\label{s1.2}

We recall that, if $E$ and $F$ are vector bundles over $X$ and 
$P: E \rightarrow F$ is a differential operator, then $P$ is said 
to be \emph{hypoelliptic} if the following condition is satisfied: 
for every local distribution $u$ with values in $E$, if $Pu$ is 
smooth on an open set $U \subset X$, then the restriction 
of $u$ to $U$ is smooth. Elliptic operators, like the usual 
Hodge Laplacian, are hypoelliptic. It is a natural question to 
ask whether $\Delta_\mQ$ is hypoelliptic since this would 
be the first step in order to prove Conjecture \ref{conj1}.

The most known sufficient condition for a second order differential 
operator to be hypoelliptic is due to H\"{o}rmander (\cite{Hor}).

\begin{thm}[Sum of squares condition of hypoellipticity]\label{horm}
Let $P$ be a second order differential operator from a vector 
bundle $E$ to itself. Suppose that locally one can find smooth 
vector fields $X_0,\dots,X_k$ and a smooth function $c$ such that 
in a local frame of $E$,
\begin{align*}
Pu = (\sum_{i=1}^k X_i^2 + X_0 + c)u
\end{align*}
(in particular $P$ acts componentwise). 
Then $P$ is hypoelliptic if and only if $X_0,\dots,X_k$ generate 
$TX$ by brackets.
\end{thm}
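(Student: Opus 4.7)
The plan is to prove the theorem in two directions; the sufficient direction is the substantial one, due to Hörmander. I would establish the subelliptic estimate
\begin{equation*}
\|u\|_{\varepsilon}^2 \leq C\bigl(|\langle Pu, u\rangle_{0}| + \|u\|_{0}^2\bigr)
\end{equation*}
for some $\varepsilon > 0$ and all smooth $u$ compactly supported in a fixed chart, where $\|\cdot\|_s$ is the Sobolev $H^s$-norm. From such an estimate, hypoellipticity follows by a standard microlocal argument: localize $u$ via cutoffs, approximate by Friedrichs mollifiers so the inequality is applied to genuinely smooth functions, and iterate on $s$ to upgrade regularity wherever $Pu$ is smooth.

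The estimate itself I would obtain in two stages. First, integration by parts in $\langle Pu, u\rangle_{0}$ yields the a priori bound
\begin{equation*}
\sum_{i=1}^k \|X_i u\|_{0}^2 \leq C\bigl(|\langle Pu, u\rangle_{0}| + \|u\|_{0}^2\bigr),
\end{equation*}
which in pseudodifferential language provides an $H^{\varepsilon}$-control in the directions spanned by $X_1,\ldots,X_k$ (the $X_0$ term gives a weaker control, sufficient for its role). Second, one iterates a commutator-trading lemma: if one controls a tangential pseudodifferential operator of order $\varepsilon$ in the directions of two vector fields $X,Y$, then one extracts a control of order $\varepsilon/2$ in the direction $[X,Y]$, up to acceptable lower-order errors. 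Applying this at most $m$ times, where $m$ is the depth at which the iterated brackets of $X_0,\ldots,X_k$ span $TX$ at the base point, yields the subelliptic estimate with $\varepsilon = 2^{-m}$.

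For the necessary direction, I would suppose that the iterated brackets of $X_0,\ldots,X_k$ fail to span $T_{x_0}X$ at some point $x_0$. By the Hermann--Sussmann orbit theorem, the orbit of $x_0$ under the Lie algebra of vector fields generated by the $X_i$ lies in a proper immersed submanifold $N \subset X$ through $x_0$. One then exhibits a distribution $u$ which is smooth along $N$ but arbitrarily rough in the transverse directions; since $P$ only differentiates along directions tangent to $N$, one can arrange $Pu$ to be smooth while $u$ is singular, contradicting hypoellipticity.

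The main obstacle is the commutator-trading step in the sufficient direction. Each application of the bracket identity $[X,Y] = XY - YX$ at the pseudodifferential level produces error terms of lower order whose Sobolev norms must be carefully dominated by the already-controlled quantities, and doing this uniformly over a neighborhood of $x_0$ requires tight control of the pseudodifferential calculus. Making the induction on the bracket depth quantitatively rigorous, keeping track of how the gain $\varepsilon$ deteriorates at each stage, is the technical heart of the proof.
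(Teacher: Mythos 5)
The paper does not actually prove Theorem \ref{horm}: it is quoted from H\"{o}rmander \cite{Hor}, so there is no internal proof to compare against. Your sketch of the sufficiency half is the standard H\"{o}rmander--Kohn argument (integration by parts to control $\sum_{i}\|X_iu\|_0^2$ by $|\langle Pu,u\rangle_0|+\|u\|_0^2$, then trading commutators for fractional Sobolev gains with the drift field $X_0$ handled separately and with a weaker gain, then bootstrapping the subelliptic estimate to hypoellipticity), and as an outline it is fine.

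The necessity half, however, contains a genuine gap, and in fact the ``only if'' direction as stated cannot be proved without further hypotheses because it is false. Fedii's operator $P=\partial_{x_1}^2+e^{-2/x_1^2}\,\partial_{x_2}^2$ on $\R^2$, i.e.\ $X_1=\partial_{x_1}$ and $X_2=e^{-1/x_1^2}\partial_{x_2}$, is hypoelliptic although every iterated bracket of $X_1,X_2$ is proportional to $\partial_{x_1}$ along the line $\{x_1=0\}$. Your argument breaks precisely at the appeal to the orbit theorem: the failure of the iterated brackets to span $T_{x_0}X$ does \emph{not} imply that the orbit of $x_0$ is a proper immersed submanifold. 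Sussmann's theorem only guarantees that the orbit's tangent space \emph{contains} the evaluation of the generated Lie algebra; for non-analytic fields it can be strictly larger, and in Fedii's example the orbit of any point of $\{x_1=0\}$ is all of $\R^2$ (flow along $X_1$ off the degenerate line, where $X_2$ is nonzero, then along $X_2$, then back). Moreover, even when the orbit through $x_0$ is proper, a single lower-dimensional leaf surrounded by open orbits does not allow you to construct a non-smooth distribution $u$ with $Pu$ smooth; for that you need a genuine foliation near $x_0$, i.e.\ the Lie algebra to have \emph{constant} rank $<\dim X$ on a neighbourhood, which is exactly the hypothesis under which H\"{o}rmander's partial converse (Frobenius, plus a function constant on leaves but rough transversally) is valid. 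For the use made of the theorem in this paper only the sufficiency direction matters, but if you want an honest ``if and only if'' you must add a constant-rank (or analyticity) assumption.
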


In \cite{GrGrKe10}, it is suggested that this theorem implies that 
the characteristic Laplacian is hypoelliptic when the distribution 
$W$ is bracket-generating. We will first give a counterexample 
due to Michael Taylor \cite{Ta10} and then compute the principal 
symbol of $\Delta_\mQ$ in order to understand why the hypoellipticity
certainly fails in general.

\begin{exa}\label{exataylor}
A contact structure on a $3$-manifold $M$ is one of 
the simplest examples of Pfaffian systems. It is the datum of 
a $2$-rank distribution $W$ on $M$ which is bracket-generating. 
For instance, one can take for $W$ the kernel of the $1$-form 
$\theta = du - pdx$ in coordinates $(x,u,p)$ in $M = \R^3$.
A natural example in which $M$ is compact is constructed 
as follows: we consider $\mH^3$ the $3$-dimension Heisenberg 
group, that is $\R^3$ with coordinates $(p,q,t)$ and group 
structure $(p,q,t)\cdot(p',q',t') = (p+p',q+q',t+t'+\frac12(pq'-p'q))$. 
One checks that the $1$-form 
$\theta = dt - \frac12 qdp + \frac12 pdq$ is right-invariant and 
defines a contact structure on $\mH^3$. 
Taking a cocompact discrete subgroup $\Gamma$ of $G$, 
$M = G/\Gamma$ still carries the contact structure.

Locally, all contact structures on $3$-manifolds are the same. 
Let $M$ be a $3$-dimensional Riemannian manifold with a contact 
form $\theta$ and corresponding $2$-rank distribution $W$. 
Let $U$ be an open set in $M$. Then

\begin{itemize}
\item $\mJ^0(U) = 0$,
\item $\mJ^1(U) = \cC^\infty(U,\R\theta)$,
\item $\mJ^2(U) = \Omega^2(U)$, $\mJ^3(U) = \Omega^3(U)$,
\end{itemize}
and
\begin{itemize}
\item $\mQ^0(U) = \cC^\infty(U)$,
\item $\mQ^1(U) = \cC^\infty(U,E)$,
\item $\mQ^2(U) = 0$, $\mQ^3(U) = 0$,
\end{itemize}
where $E \rightarrow U$ is the real vector bundle of rank $2$, 
which is the orthogonal of $\R\theta$ in $T^\ast\R^3$. In degree $1$, 
the characteristic Laplacian is well-defined on forms with compact 
support. We have
$$
\Delta^1_{\mQ} = d_\mQ d^\ast_\mQ : \cC_c^\infty(U,E) 
\rightarrow \cC_c^\infty(U,E)
$$
and one can consider it as a second order differential operator 
on $E$. Let us denote by $(X_1,X_2)$ a (smooth) orthonormal 
frame of $W$ over $\ov{U}$ and by $(\alpha^1,\alpha^2)$
the dual frame of $W^\ast$. Using the metric, $W^\ast$ can 
be seen as a subbundle of $T^\ast M$ and $E$ can be
identified with $W^\ast$ (see also the following 
subsection \ref{princsymb}). 

A form $\mu \in L^2(U,E)$ can thus be written 
$\mu = \mu_1 \alpha^1 + \mu_2 \alpha^2$, with
$\mu_i \in L^2(U,\R)$, and 
$d^\ast_\mQ \mu$ is given by
\begin{align} \label{formreel}
d^\ast_\mQ \mu = Y_1 \mu_1 + Y_2 \mu_2,
\end{align}
where $Y_1,Y_2$ are first order scalar differential operators on $U$.

In particular
$$
d^\ast_\mQ (\mu_1 \alpha^1) = Y_1 \mu_1.
$$
We claim that $Y_1 \mu_1$ can be smooth (even zero) without 
$\mu_1$ being smooth. Indeed, in some local system of centered coordinates 
$(x_1,x_2,x_3)$, star-shaped in $0$, $Y_1$ has the form 
$$
Y_1 = \frac{\d}{\d x_1} + f(x_1,x_2,x_3),
$$
where $f$ is some smooth function. Consider the function 
$$v(x_1,x_2,x_3) = \exp\Big(\int_0^{x_1}f(t,x_2,x_3)dt\Big) 
\mu_1(x_1,x_2,x_3)$$
which is smooth if and only if $\mu_1$ is smooth. We compute that 
$$
\frac{\d}{\d x_1} v(x_1,x_2,x_3) = 
\exp\Big(\int_0^{x_1}f(t,x_2,x_3)dt\Big) Y_1 \mu_1(x_1,x_2,x_3).
$$
Choosing $v$ independant of $x_1$ but \emph{not} smooth, one has 
$Y_1 \mu_1 = 0$, proving the claim.

\end{exa}

\begin{rem}
In an analogous example occurring in the complex situation
(Example \ref{exataylorc}), we will need to be more precise.
In particular, formula (\ref{formreel}) can be made explicit:
\begin{align}
d^\ast_\mQ \mu = - \dive(\mu_1 X_1 + \mu_2 X_2),
\end{align}
where $\dive(X)$ is the divergence of the vector field $X$.
\end{rem}

%In particular, if one takes $\mu_2 = 0$ and $\mu_1$ constant 
%in the direction given by $X_1$ but $\mu_1$ \emph{not} smooth, 
%we see that $\Delta^1_{\mQ} \mu = 0$ but $\mu$ is not smooth. 
%This shows that $\Delta^1_{\mQ}$ is not hypoelliptic.

%If $\Delta^1$ was hypoelliptic, then in particular the implication
%$$u \in L^2(U,E), d^\ast u \in \cC^\infty(U) \implies u 
%\in \cC^\infty(U,E)$$
%would be true.
%If $E$ trivializes over $U$, one can write $u=(u_1,u_2)^t$ and 
%$$
%d^\ast\binom{u_1}{u_2} = X_1u_1 + X_2u_2,
%$$
%where $X_1,X_2$ are first order scalar differential operators on 
%$U$. In particular
%$$
%d^\ast\binom{u_1}{0} = X_1u_1.
%$$
%We claim that $X_1u_1$ can be smooth (even zero) without
%$u_1$ being smooth. Indeed, in some local centered coordinates 
%$(x_1,x_2,x_3)$, $X_1$ has the form 
%$$
%X_1 = \frac{\d}{\d x_1} + f(x_1,x_2,x_3),
%$$
%where $f$ is some smooth function. Consider the function 
%$$v(x_1,x_2,x_3) = \exp(\int_0^{x_1}f(t,x_2,x_3)dt) 
%u_1(x_1,x_2,x_3)$$
%which is smooth if and only if $u$ is smooth. We compute that 
%$$
%\frac{\d}{\d x_1} v(x_1,x_2,x_3) 
%= \exp(\int_0^{x_1}f(t,x_2,x_3)dt) X_1 u_1(x_1,x_2,x_3).
%$$
%Choosing $v$ constant in $x_1$ but \emph{not} smooth, 
%%one has $X_1 u_1 = 0$, proving the claim.

In order to better understand why the hypoellipticity seems to fail, 
we will compute the principal symbol of $\Delta_\mQ$. 
It will not show that $\Delta_\mQ$
is not hypoelliptic but it will at least show that the sum of 
squares condition cannot be applied, at least if one only 
considers the second order terms.

%%%%%%%%%%%%%%%%%%%%%%%%%%%%%%%%
\subsection{The principal symbol of the characteristic Laplacian}\label{princsymb}

Recall that we denote by $F$ the annihilator of $W$ in $T^\ast X$. 
Let $N$ be the orthogonal complement of $W$ in $(TX, g^{TX})$. 
Then as a smooth vector bundle, we have 
\begin{align} \label{cc1.12}
TX= W\oplus N, \quad \text{ and  } \quad 
T^\ast X = W^*\oplus N^{*}.
\end{align}
We can identify $N^*$ and $F$ as $\cC^\infty$ vector bundles, and 
\begin{align} \label{cc1.13}
\mI= \cC^\infty (X, N^* \widehat{\otimes} \Lambda (T^* X)) .
\end{align}

The quotient $\Omega(X)/\mI$  can be identified with the orthogonal 
complement of $\mI$, i.e.,
\begin{align} \label{cc1.14}
\Omega_W(X) := \cC^{\infty}(X, \Lambda W^\ast).
\end{align}
In what follows, we do these identifications  without further notice.

Since $\mI \subset \mJ$, $\mQ=\mJ^\bot$ can be viewed as 
a subspace of $ \Omega_W(X)$. The orthogonal complement of 
$\mQ$ in $\Omega_W(X)$ will be identified with $\mJ/\mI$. 
We thus have the following decompositions:
\begin{equation} \label{fst}
 \Omega(X)
 = \mI \oplus \Omega_W(X)
\end{equation}
and
\begin{equation}\label{fst1}
 \Omega_W(X)= \mJ/\mI \oplus \mQ.
\end{equation}

All of these spaces are naturally graded. We define a map 
$\varphi: F \to \Lambda^2(W^*)$ by: for 
$\theta\in \cC^\infty(X,F)$, $v,w\in \cC^\infty(X,W)$, set
\begin{align}\label{cc2.2}
    \varphi (\theta)(v,w) := (d\theta)(v,w) 
    =-  \theta([v,w]).
\end{align}
We check that for any $x\in X$, $\varphi (\theta)(v,w)_{x}$ 
depends only on $\theta_{x}$, $v_{x}$ and $w_{x}$.

The map $\varphi: F \to \Lambda^2 W^*$ induces a map 
$\varphi: F\widehat{\otimes}\Lambda^k W^* 
\to \Lambda^{k+2}W^*$ for any $k$. We will \textbf{assume} 
that the rank of these maps is constant on $X$, for any $k$. 
In particular, $\varphi(F\widehat{\otimes}\Lambda^k W^*)$ 
forms a vector subbundle of $\Lambda^{k+2} W^*$ on $X$ 
for any $k$. 
Set $F_{\varphi}:=\varphi(F\widehat{\otimes}\Lambda W^*)$ 
and let $F_{\varphi,\bot}$ be the orthogonal complement of 
$F_{\varphi}$ in $\Lambda W^*$ over $X$.

By construction, we thus have an orthogonal decomposition 
\begin{align}\label{cc2.3}
\Lambda W^\ast  = F_\varphi \oplus F_{\varphi,\bot}
\end{align}
and by (\ref{cc1.14}) and (\ref{cc2.3}), this decomposition
induces (\ref{fst1}), that is
\begin{equation}\label{cc2.40}
\begin{aligned}
 \mJ/\mI &= \cC^\infty(X,F_\varphi),\\
 \mQ &= \cC^\infty(X,F_{\varphi,\bot}).
\end{aligned}
\end{equation}
We denote by $\pi_{F_\varphi}$ and $\pi_{F_{\varphi,\bot}}$ 
the orthogonal projections from $\Lambda W^\ast$ onto 
$F_{\varphi}$ and $F_{\varphi,\bot}$. In order to make 
computations with the operators on $\mQ$, 
we construct intermediate operators on $\Lambda W^\ast$. 
First we define 
\begin{align} \label{cc1.15}
d_W := \pi_W \circ d \circ \pi_W : \Lambda W^\ast  
\rightarrow \Lambda W^\ast ,
\end{align}
where $\pi_W$ is the projection from $\Lambda(T^\ast X)$ 
on $\Lambda W^\ast$ in the decomposition (\ref{fst}). 
Beware that there is no reason for $d_W^2$ to be $0$. 
The adjoint $d_W^\ast$ of $d_W$ satisfies
\begin{align} \label{cc1.16}
d_W^\ast = \pi_W \circ d^\ast \circ \pi_W.
\end{align}

From (\ref{cc1.3}), (\ref{cc1.15}) and (\ref{cc1.16}), we have
\begin{equation}\label{cc1.18}
\begin{aligned} 
d_\mQ &= \pi_\mQ \circ d_W \circ \pi_\mQ,\\
d_\mQ^\ast &= \pi_\mQ \circ d_W^\ast \circ \pi_\mQ.
\end{aligned}
\end{equation}

By (\ref{cc1.15}) and (\ref{cc1.16}), $d_W$ and $d_W^\ast$ 
are first order differential operators on $\Lambda W^\ast$ 
and their principal symbols are, for $\xi \in T^\ast X$

\begin{equation}
\begin{aligned}
\sigma_1 (d_W, \xi) &= \sqrt{-1}\, \xi_W \wedge,\\
\sigma_1 (d_W^\ast, \xi) &= -\sqrt{-1}\, i_{\xi_W^\ast},
\end{aligned}
\end{equation}
where $\xi_W$ is the orthogonal projection of $\xi$ on $W^\ast$ 
and $\xi_W^\ast \in W$ is the metric dual of $\xi_W$.

By (\ref{cc2.40}) and (\ref{cc1.18}), $d_\mQ$ and $d_\mQ^\ast$ 
are first order differential operators on $F_{\varphi,\bot}$ 
and their principal symbols are
\begin{equation} \label{symbd}
\begin{aligned}
\sigma_1(d_\mQ,\xi) &= \sqrt{-1}\,\pi_{F_{\varphi,\bot}} 
\xi_W \wedge \pi_{F_{\varphi,\bot}},\\
\sigma_1(d_\mQ^\ast,\xi) &= -\sqrt{-1}\,\pi_{F_{\varphi,\bot}} 
i_{\xi_W^\ast} \pi_{F_{\varphi,\bot}}.
\end{aligned}
\end{equation}

One also gets the adjoint formula of (\ref{cc1.5})
\begin{align} \label{cc1.5b}
\pi_\mQ \circ d^\ast \circ \pi_\mQ = d^\ast \circ \pi_\mQ.
\end{align}

Taking the principal symbols of (\ref{cc1.5}) and (\ref{cc1.5b}), 
we have
\begin{equation}  \label{cc2.10}
\begin{aligned}
\pi_{F_{\varphi,\bot}}   \xi_W \wedge\pi_{F_{\varphi,\bot}} 
= \pi_{F_{\varphi,\bot}} \xi_W \wedge,\\
\pi_{F_{\varphi,\bot}}    i_{  \xi_W ^*}  \pi_{F_{\varphi,\bot}} 
= i_{  \xi_W ^*}  \pi_{F_{\varphi,\bot}}.
\end{aligned}
\end{equation}

\begin{prop} \label{propprinc}
The characteristic Laplacian is a second order differential operator 
on $F_{\varphi,\bot}$ and its principal symbol is
\begin{align}\label{symblap}
\sigma_2(\Delta_\mQ,\xi) = \pi_{F_{\varphi,\bot}}
(|\xi_W|^2 - i_{\xi_W^\ast} \pi_{F_\varphi} \xi_W \wedge) 
\pi_{F_{\varphi,\bot}}.
\end{align}
\end{prop}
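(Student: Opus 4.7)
The plan is to compute $\sigma_2(\Delta_\mQ,\xi)$ by exploiting the multiplicativity of principal symbols:
\begin{equation*}
\sigma_2(\Delta_\mQ,\xi) = \sigma_1(d_\mQ,\xi)\,\sigma_1(d_\mQ^\ast,\xi) + \sigma_1(d_\mQ^\ast,\xi)\,\sigma_1(d_\mQ,\xi),
\end{equation*}
and then to simplify the resulting expression using the standard exterior-algebra identity together with the compatibility relations (\ref{cc2.10}).

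First, I would substitute the formulas from (\ref{symbd}). Since $\sqrt{-1}\cdot(-\sqrt{-1})=1$ and $\pi_{F_{\varphi,\bot}}^2=\pi_{F_{\varphi,\bot}}$, this gives
\begin{equation*}
\sigma_2(\Delta_\mQ,\xi) = \pi_{F_{\varphi,\bot}}\bigl(\xi_W\wedge \pi_{F_{\varphi,\bot}} i_{\xi_W^\ast} + i_{\xi_W^\ast}\pi_{F_{\varphi,\bot}}\,\xi_W\wedge\bigr)\pi_{F_{\varphi,\bot}}.
\end{equation*}
Next, I would write $\pi_{F_{\varphi,\bot}}=\mathrm{Id}-\pi_{F_\varphi}$ in the two middle occurrences, and use the Clifford-like identity $\xi_W\wedge i_{\xi_W^\ast}+i_{\xi_W^\ast}\,\xi_W\wedge=|\xi_W|^2\,\mathrm{Id}$ on $\Lambda W^\ast$, which holds because $\xi_W\in W^\ast$ and $\xi_W^\ast\in W$ are metric duals. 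This yields
\begin{equation*}
\sigma_2(\Delta_\mQ,\xi) = \pi_{F_{\varphi,\bot}}\bigl(|\xi_W|^2 - \xi_W\wedge \pi_{F_\varphi} i_{\xi_W^\ast} - i_{\xi_W^\ast}\pi_{F_\varphi}\,\xi_W\wedge\bigr)\pi_{F_{\varphi,\bot}}.
\end{equation*}

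The key step is then to kill the term $\pi_{F_{\varphi,\bot}}\,\xi_W\wedge \pi_{F_\varphi} i_{\xi_W^\ast}\,\pi_{F_{\varphi,\bot}}$. The first identity in (\ref{cc2.10}) is equivalent to the assertion that $\pi_{F_{\varphi,\bot}}\,\xi_W\wedge\pi_{F_\varphi}=0$ (apply it to $\alpha-\pi_{F_{\varphi,\bot}}\alpha=\pi_{F_\varphi}\alpha$), so the unwanted term vanishes identically. Collecting what remains gives exactly (\ref{symblap}).

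I expect the main obstacle to be essentially bookkeeping: making sure the two projectors $\pi_{F_{\varphi,\bot}}$ on the extreme left and right are kept throughout (since $\Delta_\mQ$ lives on $F_{\varphi,\bot}$), and verifying carefully that the cancellation really comes from (\ref{cc2.10}) rather than from some stronger and unavailable property of $\pi_{F_\varphi}$. Once these two points are controlled, the computation is a direct application of the $\sigma_1\cdot\sigma_1$ rule and of the basic Koszul identity on the exterior algebra.
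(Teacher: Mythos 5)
Your proposal is correct and follows essentially the same route as the paper's proof: multiplicativity of principal symbols applied to (\ref{symbd}), the Koszul identity $\xi_W\wedge i_{\xi_W^\ast}+i_{\xi_W^\ast}\,\xi_W\wedge=|\xi_W|^2$, and the compatibility relations (\ref{cc2.10}) to dispose of the unwanted term $\pi_{F_{\varphi,\bot}}\,\xi_W\wedge \pi_{F_\varphi}\, i_{\xi_W^\ast}\,\pi_{F_{\varphi,\bot}}$. The only cosmetic difference is that the paper applies the second identity of (\ref{cc2.10}) directly to the $d_\mQ d_\mQ^\ast$ term rather than expanding both middle projectors as $\mathrm{Id}-\pi_{F_\varphi}$ and then killing the extra term via the first identity, which is the adjoint of the same fact.
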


\begin{rem}
The first term in (\ref{symblap}) is the term suggested in 
\cite{GrGrKe10} but the second term was forgotten. 
Because of this second term, one cannot apply H\"{o}rmander's 
condition of hypoellipticity; see Example \ref{exataylorb}.
\end{rem}

\begin{proof}[Proof of Proposition \ref{propprinc}]
By (\ref{cc1.10}), (\ref{symbd}) and (\ref{cc2.10}), 
$\Delta_\mQ$ is a second order differential operator on 
$F_{\varphi,\bot}$ and its principal symbol is
\begin{align*}
\sigma_2(\Delta_\mQ,\xi) &= \pi_{F_{\varphi,\bot}} \xi_W 
\wedge \pi_{F_{\varphi,\bot}} i_{\xi_W^\ast} 
\pi_{F_{\varphi,\bot}} +
 \pi_{F_{\varphi,\bot}} i_{\xi_W^\ast} \pi_{F_{\varphi,\bot}}
 \xi_W \wedge \pi_{F_{\varphi,\bot}}\\
&= \pi_{F_{\varphi,\bot}} \xi_W \wedge i_{\xi_W^\ast} 
\pi_{F_{\varphi,\bot}} + 
\pi_{F_{\varphi,\bot}} i_{\xi_W^\ast} \xi_W \wedge 
\pi_{F_{\varphi,\bot}} - \pi_{F_{\varphi,\bot}} i_{\xi_W^\ast} 
\pi_{F_{\varphi}} \xi_W \wedge \pi_{F_{\varphi,\bot}}\\
&= \pi_{F_{\varphi,\bot}}(|\xi_W|^2 - i_{\xi_W^\ast} 
\pi_{F_\varphi} \xi_W \wedge) \pi_{F_{\varphi,\bot}}.
\end{align*}

In the second equality, we use (\ref{cc2.10}) and in the third, 
we use the identity 
$$i_{\xi_W^\ast} \xi_W \wedge + \xi_W \wedge i_{\xi_W^\ast} 
= |\xi_W|^2.$$

The proof of Proposition \ref{propprinc} is completed.
\end{proof}

\begin{exa}\label{exataylorb}
We review Example \ref{exataylor} and compute the principal 
symbol of the characteristic Laplacian in degree one. 
With the identifications at the beginning of the paragraph, 
$\mQ^1$ is the space of sections of $W^\ast$ and $\mQ^2$
is zero. Let $\eta$ be in $W^\ast$. Since in (\ref{symblap}), 
only the projection $\xi_W$ of $\xi$ is involved, we can 
restrict the symbol to the $\xi$ belonging to $W^\ast$. We have
\begin{align*}
\sigma_2(\Delta_\mQ, \xi) \eta &= |\xi|^2 \eta - 
\pi_{F_{\varphi,\bot}} i_{\xi^\ast} 
\pi_{F_\varphi} (\xi \wedge\eta)\\
&= |\xi|^2 \eta - i_{\xi^\ast}(\xi \wedge \eta)\\
&= |\xi|^2 \eta - (|\xi|^2 \eta - \eta(\xi^\ast)\xi)\\
&= \eta(\xi^\ast)\xi.
\end{align*}
In the second equality, we use that $F_{\varphi,\bot}$ 
(resp. $F_\varphi)$ equals $\Lambda W^\ast$ in 
degree $1$ (resp. $2$).

If $\xi = (\xi_1, \xi_2)^t$ and $\eta = (\eta_1, \eta_2)^t$
in an orthonormal frame for $W^\ast$, we get
\begin{align*}
\sigma_2(\Delta_\mQ, \xi) \binom{\eta_1}{\eta_2} 
= (\xi_1\eta_1 + \xi_2\eta_2)\binom{\xi_1}{\xi_2} 
= \binom{\xi_1^2 \eta_1 
+ \xi_1\xi_2 \eta_2}{\xi_1\xi_2 \eta_1 + \xi_2^2 \eta_2}.
\end{align*}

Otherwise said, we have the equality
\begin{align*}
-\Delta_\mQ \binom{\eta_1}{\eta_2} 
= \binom{X_1^2 \eta_1 + X_1X_2\eta_2}{X_1X_2 \eta_1 
+ X_2^2 \eta_2} + X_0 \binom{\eta_1}{\eta_2},
\end{align*}
where $(X_1,X_2)$ is a local frame of $W$ 
and $X_0$ is a first order differential 
operator, which does not necessarily act componentwise. 
Since the second order does not act componentwise, 
one cannot apply H\"{o}rmander's condition of Theorem \ref{horm}.
\end{exa}
%%%%%%%%%%%%%%%%%%%%%%%%%%%%%%%%%
\subsection{The complex situation}\label{subscompl}

In the remainder of the article, we will be interested 
in Pfaffian systems over a complex manifold. More precisely, 
let $(X,J)$ be a compact complex manifold; $J$ induces a splitting 
$TX\otimes_\R \C=T^{(1,0)}X\oplus T^{(0,1)}X$, 
where $T^{(1,0)}X$ and $T^{(0,1)}X$
\index{$T^{(1,0)}X, T^{(0,1)}X$} are the eigenbundles 
of $J$ corresponding to the eigenvalues $\sqrt{-1}$ and 
$-\sqrt{-1}$, respectively. Let $T^{*(1,0)}X$  and $T^{*(0,1)}X$ 
\index{$T^{*(1,0)}X, T^{*(0,1)}X$}
be the corresponding dual bundles.

We still denote by $\Omega^k(X)$ the space of smooth $k$-forms 
on $X$ with values in $\C$. Let 
\begin{align} \label{lm2.11b}
\Lambda^{p,q}(T^*X)=
\Lambda^p(T^{*(1,0)}X)\otimes \Lambda^q(T^{*(0,1)}X),\quad
\Omega^{p,q}(X):
=\cC^\infty(X,\Lambda^{p,q}(T^*X)).
\end{align}
Then $\Omega^{p,q}(X)$ is the space of smooth  
$(p,q)$-forms on $X$, and  
$\Omega^k(X)=\oplus_{p+q=k}\, \Omega^{p,q}(X)$.

Let $\Theta$ be a real $(1,1)$-form such that
 \begin{align} \label{lm02.12}
g^{TX}(\cdot,\cdot ) = \Theta(\cdot,J\cdot)
\end{align}
defines a Riemannian metric on $TX$. The triple $(X,J, \Theta)$
is called a \emph{complex {H}ermitian manifold}. If $\Theta$ is 
a closed form, then the form $\Theta$ is called a K\"{a}hler form 
on $X$.

We denote the holomorphic tangent bundle by $T_h X$. 
Let $W \subset T_h X$ be a (constant-rank) holomorphic distribution. 
We consider the Pfaffian system associated 
to the distribution $W_\R$. Otherwise said, if we denote by 
$F \subset T_h^\ast X$ the holomorphic annihilator of $W$, 
then the exterior differential system $\mJ$ we consider 
is generated by $F_\R \subset T^\ast X$. 
Beware of the notations that differ from the real case. 
Remark that $\mJ$ is not only $d$-stable: it is also $\d$ 
and $\bar{\d}$-stable. Indeed, if $I_X=\cC^{\infty}(X, F)$
is the space of smooth sections of $F$ on $X$ then 
$\ov{I}_X=\cC^{\infty}(X,  \ov{F})$ and $d$ acts on $I_X$ 
(resp. $\ov{I}_X$) as $\d$ modulo 
$\ov{I}_X \cdot \Omega^{1}(X)$ (resp. $\bar{\d}$
modulo $I_X  \cdot \Omega^{1}(X)$).

Locally, if $(\theta_{j})$ is a holomorphic frame of $F$, 
the forms in $\mI$ can be written as
$$
\sum_j \theta_j \wedge \psi_j 
+ \overline{\theta}_j \wedge \phi_j,
$$
where $\psi_j, \phi_j$ are arbitrary forms, and those in $\mJ$ 
are of the form
\begin{align*}
\sum_j \theta_j \wedge \psi_j 
+ \overline{\theta}_j \wedge \phi_j
+ d\theta_j \wedge \omega_j 
+ d\overline{\theta}_j \wedge \chi_j,
\end{align*}
where $\psi_j, \phi_j, \omega_j, \chi_j$ are arbitrary forms.

We still denote by $\mQ$ the orthogonal of $\mJ$. Besides 
the operator $d_\mQ$, we also define $\d_\mQ$ and 
$\bar{\d}_\mQ$ by
\begin{align} \label{cc1.4}
\d_\mQ := \pi_\mQ \circ \d\circ \pi_\mQ, \quad
\bar{\d}_\mQ := \pi_\mQ \circ \bar{\d}\circ \pi_\mQ.
\end{align}

Moreover, as in (\ref{equadj}), the adjoints of $\d_\mQ$ and $\bar{\d}_\mQ$ 
(for the natural $L^2$-structure on $\mQ$) are the restrictions 
to $\mQ$ of $\d^\ast, \bar{\d}^\ast$ the adjoints of $\d$ and $\bar{\d}$.

If we denote by $N$ the orthogonal complement of $W$
in $(TX,g^{TX})$, one obtains analogue decompositions 
as the ones in the previous subsection. In particular, 
denoting by $\mI$ the algebraic ideal generated by $F_\R$, 
one has the analogues of (\ref{cc1.13}), (\ref{cc1.14}), (\ref{fst}) 
and (\ref{fst1}):
\begin{equation} \label{cc1.16a}
\begin{aligned}
&\mI = \cC^\infty (X, N_\R^* \widehat{\otimes} \Lambda (T^* X)),\\
&\Omega_W(X) := \cC^{\infty}(X, \Lambda W_\R^\ast),\\
 &\Omega(X) = \mI \oplus \Omega_W(X),\\
 &\Omega_W(X) = \mJ/\mI \oplus \mQ.
\end{aligned}
\end{equation}

All of these spaces carry a natural bigrading. Denoting by 
$\pi_W$ the orthogonal projection from $\Lambda(T^\ast X)$ 
onto $\Lambda W_\R^\ast$, we define
\begin{align} \label{cc1.17}
\begin{split}
\d_W :=    \pi_W \circ \d \circ \pi_W,  \quad\bar{\d}_W:= 
\pi_W \circ  \bar{\d} \circ \pi_W,\\
\d_W^\ast =    \pi_W \circ \d^\ast \circ \pi_W,  \quad
\bar{\d}_W^\ast=  \pi_W \circ  \bar{\d}^\ast \circ \pi_W.
\end{split}
\end{align}
Then, we have, besides equations (\ref{cc1.15}),
(\ref{cc1.16}) and (\ref{cc1.18}),
\begin{align} \label{cc1.18a}
\begin{split}
 \d_\mQ =    \pi_\mQ \circ \d_W \circ \pi_\mQ,  \quad
\bar{\d}_\mQ=  \pi_\mQ \circ  \bar{\d}_W \circ \pi_\mQ,\\
\d_\mQ^\ast =    \pi_\mQ \circ \d^\ast_W \circ \pi_\mQ,  \quad
\bar{\d}_\mQ^\ast=  \pi_\mQ \circ  \bar{\d}^\ast_W\circ \pi_\mQ.
\end{split}
\end{align}

We still define a map $\varphi : F \rightarrow \Lambda^2 W^\ast$ 
as in (\ref{cc2.2}). Since in the definition (\ref{cc2.2}), 
we can take $\theta, v, w$ to be holomorphic sections, 
it proves that $\varphi$ is a holomorphic map. 
We still \textbf{assume} that the induced map 
$\varphi: F\widehat{\otimes}\Lambda W_\R^\ast 
\rightarrow \Lambda W_\R^\ast$ has constant rank. 
Set $F_\varphi :=\varphi(F\widehat{\otimes}\Lambda W^*)$ 
and $F_{\varphi,\bot}$ the orthogonal complement of 
$F_{\varphi}$ in $\Lambda W^*$ and let $\pi_{F_{\varphi}}$, 
$\pi_{F_{\varphi,\bot}}$ be the orthogonal projections from 
$\Lambda W^*$ onto $F_{\varphi}$, $F_{\varphi,\bot}$. 

Then $F_{\varphi}\widehat{\otimes}\Lambda \ov{W}^*
+\Lambda W^*\widehat{\otimes} \ov{F}_{\varphi}$ and 
$F_{\varphi,\bot}\widehat{\otimes} \ov{F}_{\varphi,\bot}$ 
are vector subbundles of $\Lambda W^*_{\R} \otimes_\R \C$ over  $X$, and  
\begin{align}\label{cc2.3b}\begin{split}
\Lambda W^*_{\R} \otimes_\R \C& =   
\Big(F_{\varphi}\widehat{\otimes}\Lambda \ov{W}^*
+\Lambda W^*\widehat{\otimes} \ov{F}_{\varphi}\Big)
\oplus F_{\varphi,\bot}\widehat{\otimes} \ov{F}_{\varphi,\bot}.
    \end{split}\end{align}
		
In this decomposition, we have, as in (\ref{cc2.40}),
\begin{align}\label{cc2.4}\begin{split}
\mJ/\mI &= \cC^\infty (X, F_{\varphi}\widehat{\otimes}\Lambda \ov{W}^*
+\Lambda W^*\widehat{\otimes} \ov{F}_{\varphi}). \\
\mQ &= \cC^\infty (X, F_{\varphi,\bot}\widehat{\otimes} 
\ov{F}_{\varphi,\bot}).
    \end{split}\end{align}
		
\begin{exa}\label{exataylorc}

Example \ref{exataylor} can also be seen in the complex 
situation but an interesting phenomenon appears in degree $1$. 
Consider the complex manifold $ M= \C^3$ with complex 
coordinates $(x,u,p)$ and the holomorphic $1$-form 
$\theta = du - pdx$. We denote by $(\cdot,\cdot)$ 
a Hermitian metric on $M$ and by $|\cdot|$ the corresponding 
norm. Using the notations of this subsection, 
$W$ is a holomorphic vector subbundle of $T_h X$ of rank $2$.
Hence $\Lambda^2 W^\ast $ is of rank $1$ and we have 
$F_\varphi = \Lambda^2 W^\ast $. We thus obtain the 
following bidegree decomposition of $\mQ$ over an open set $U$:

\begin{itemize}
\item $\mQ^{0,0}(U) = \cC^\infty(U)$,
\item $\mQ^{1,0}(U) = \cC^\infty(U,W^\ast)$,
\item $\mQ^{0,1}(U) = \cC^\infty(U, \overline{W}^\ast)$,
\item $\mQ^{1,1}(U) = \cC^\infty(U, W^\ast \widehat{\otimes}
\overline{W}^\ast)$.
\end{itemize}

Take $(X_1,X_2)$ a holomorphic frame of $W$ on $\overline{U}$ and 
$(\alpha^1,\alpha^2)$ its dual frame. We study the 
smoothness of harmonic forms for $\Delta_\mQ$ in degrees $0$, $1$ and $2$.

For $f \in L^2(U,\C)$, one has $\Delta_\mQ f = 0$ if and only if $d_\mQ f = 0$. 
Using the frames, one obtains
$$ d_\mQ f = X_1(f) \alpha^1 + X_2(f) \alpha^2 
+ \overline{X}_1(f) \overline{\alpha}^1 + \overline{X}_2(f)
\overline{\alpha}^2.$$
Since $X_1, X_2, \overline{X}_1$ and $\overline{X}_2$ 
generate by brackets the tangent bundle of $U$, 
a function $f$ such that $d_\mQ f = 0$ is in fact locally constant. 
Remark that the same argument works for any distribution $W$ 
which is bracket-generating. This proves that the harmonic
functions are smooth.

In degree $1$, let $\mu = \mu_1 \alpha^1 + \mu_2 \alpha^2$ 
be in $L^2(U,W^\ast)$ (here $\mu_1, \mu_2 \in L^2(U,\C)$). 
Such a form is cancelled by $\Delta^1_\mQ$ 
if and only if it is cancelled by both $d_\mQ$ and $d^\ast_\mQ$.

We denote by $\dive(Y)$ the divergence of a vector field $Y$ and
by $dv_X$ the Riemannian volume form on $(X,g^{TX})$.
\begin{lemma} \label{lemdeg1}
The following identities hold:
\begin{eqnarray*}
d_\mQ \mu &= \sum_{i,j = 1}^2 \overline{X}_j(\mu_i) 
\overline{\alpha}^j \wedge \alpha^i, \\
d^\ast_\mQ \mu &= - \dive(\sum_{i=1}^2 (\mu,\alpha_i) 
\overline{X_i}).
\end{eqnarray*}
\end{lemma}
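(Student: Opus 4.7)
The plan is to prove the two identities separately: the first via Cartan's formula applied to pairs $(\bar X_j, X_i)$, the second by $L^2$-adjunction. The structural observation that simplifies everything is that under the contact hypothesis the rank-$1$ map $\varphi : F \to \Lambda^2 W^*$ is an isomorphism, so $F_\varphi = \Lambda^2 W^*$; consequently $\mQ^{(2,0)} = \mQ^{(0,2)} = 0$ and $\mQ^{(1,1)} = \cC^\infty(U, W^* \widehat{\otimes} \bar W^*)$.

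For the first identity, $\mu \in \mQ$ gives $d_\mQ \mu = \pi_\mQ(d\mu)$, and the $(2,0)$-part of $d\mu$ projects to zero because $\mQ^{(2,0)} = 0$. To compute the $\mQ^{(1,1)}$-component of the remaining $\bar{\d}\mu$, apply Cartan's formula to the pair $(\bar X_j, X_i)$:
\begin{align*}
d\mu(\bar X_j, X_i) = \bar X_j(\mu(X_i)) - X_i(\mu(\bar X_j)) - \mu([\bar X_j, X_i]).
\end{align*}
The middle term vanishes since $\mu$ is of type $(1,0)$; the last vanishes since $X_i, X_j$ are holomorphic, which forces $[\bar X_j, X_i] = 0$ by the standard identity for brackets of holomorphic with antiholomorphic vector fields. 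Hence $d\mu(\bar X_j, X_i) = \bar X_j(\mu_i)$. The form $\omega = \sum_{i,j} \bar X_j(\mu_i)\, \bar\alpha^j \wedge \alpha^i$ lies in $\mQ^{(1,1)}$ and, using $(\bar\alpha^j \wedge \alpha^i)(\bar X_k, X_l) = \delta^j_k \delta^i_l$, agrees with $d\mu$ on every pair in $\bar W \times W$; the discrepancy $d\mu - \omega$ then vanishes on $\bar W \times W$, so it must contain a factor of $F$ or $\bar F$ and hence lies in $\mI \subset \mJ$. Applying $\pi_\mQ$ yields the claimed formula.

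For the second identity, the restriction property of the codifferential (see the discussion around (\ref{equadj})) gives $d^*_\mQ \mu = d^* \mu$; and $\bar{\d}^* \mu = 0$ for bidegree reasons, so $d^*_\mQ \mu = \d^* \mu$ is a scalar function. Take $(X_1, X_2)$ locally orthonormal (possible after a smooth change of frame) and test against any $f \in \cC_c^\infty(U, \C)$:
\begin{align*}
\int_U (\d^* \mu)\, \bar f\, dv_X
= \int_U (\mu, \d f)\, dv_X
= \sum_i \int_U \mu_i\, \bar X_i(\bar f)\, dv_X,
\end{align*}
using that $(\mu, \d f) = \sum_i \mu_i\, \overline{X_i(f)}$ since the complement of $\sum_i X_i(f)\alpha^i$ in $\d f$ lies in $(N^{1,0})^*$ and is orthogonal to $\mu \in W^*$, together with $\overline{X_i(f)} = \bar X_i(\bar f)$. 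Integration by parts via $\int_U \dive(\bar f \mu_i \bar X_i)\, dv_X = 0$ rewrites the right-hand side as $-\int_U \bar f\, \dive\bigl(\sum_i \mu_i \bar X_i\bigr)\, dv_X$, giving $\d^* \mu = -\dive\bigl(\sum_i \mu_i \bar X_i\bigr) = -\dive\bigl(\sum_i (\mu, \alpha^i) \bar X_i\bigr)$.

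The only delicate step is identifying the $\mQ^{(1,1)}$-component of $d\mu$ without explicitly expanding $\bar{\d}\alpha^i$, whose ambient expression would involve the non-holomorphic orthogonal complement of $W$ in $T_hX$. Cartan's formula sidesteps this by exploiting the holomorphy of the frame through $[\bar X_j, X_i] = 0$; the remaining computations are bookkeeping with Hermitian sign conventions and local orthonormalisation.
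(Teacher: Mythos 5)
Your proof is correct and follows essentially the same route as the paper's: both reduce the first identity to the vanishing of the mixed brackets $[X_i,\overline{X}_j]$ of the holomorphic frame (you via Cartan's formula applied to $\mu$, the paper via the Leibniz rule together with $d\alpha^k(X_i,\overline{X}_j)=-\alpha^k([X_i,\overline{X}_j])=0$), and both obtain the adjoint formula by pairing against test functions and integrating by parts through the divergence. The only cosmetic difference is your orthonormalisation of the frame in the second identity, which is unnecessary --- the computation goes through verbatim in the holomorphic frame since the statement is written with $(\mu,\alpha^i)$ rather than the raw coefficients.
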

\begin{proof}
Since $\mQ^{2,0}(U) = 0$, we know $d_\mQ \mu = \bar{\d}_\mQ \mu$. 
The $2$-form $d\alpha^k$ satisfies
$$
d\alpha^k(Y_1,Y_2) = Y_1(\alpha^k(Y_2))-Y_2(\alpha^k(Y_1)) 
- \alpha^k([Y_1,Y_2]).
$$
Since $(\alpha^1,\alpha^2)$ is the dual basis of $(X_1,X_2)$, 
this simplifies to $d\alpha^k(Y_1,Y_2) = -\alpha^k([Y_1,Y_2])$ 
when $Y_l$ is either a $X_i$ or a $\overline{X}_j$. Since
the $X_i$ are holomorphic, the brackets $[X_i,\overline{X}_j]$ 
vanish and $d\alpha^k(X_i,\overline{X}_j) = 0$. This implies that
\begin{eqnarray*}
d_\mQ \mu &= \sum_{i=1}^2 \bar{\d}_\mQ \mu_i \alpha^i
&= \sum_{i,j=1}^2 \overline{X}_j(\mu_i) \overline{\alpha}^j 
\wedge \alpha^i.
\end{eqnarray*}

For the second equality, one has for every smooth function $f$,
\begin{align*}
(d^\ast_\mQ \mu, f) &= (\d^\ast_\mQ \mu, f) \\
&= (\mu, \d_\mQ f) \\
&= - \sum_{i=1}^2 \int_M \overline{X_i(f)} (\mu,\alpha^i) dv_X \\
&= - \sum_{i=1}^2 \int_M \bar{f}(\overline{X}_i (\mu,\alpha^i) 
+ \dive \overline{X}_i) dv_X.
\end{align*}

Hence $d^\ast_\mQ \mu = -\sum_{i=1}^2 
(\overline{X}_i (\mu,\alpha^i) + (\mu,\alpha^i)\dive \overline{X}_i) 
= - \dive(\sum_{i=1}^2 (\mu,\alpha_i) \overline{X_i})$.
\end{proof}

Using Lemma \ref{lemdeg1}, if $\Delta^1_\mQ \mu= 0$ then 
$\overline{X}_j(\mu_i) = 0$ for all $i,j$ from $1$ to $2$. 
Since the $\overline{X}_j$ generate by brackets 
the anti-holomorphic tangent bundle, this is equivalent to 
$\mu_i$ being holomorphic in the weak sense. Since $\bar{\d}$ 
is an elliptic operator on functions, the $\mu_i$ are holomorphic.
This shows that the harmonic forms of bidegree $(1,0)$ 
(resp. $(0,1)$) are holomorphic (resp. anti-holomorphic) 
sections of $W^\ast$ (resp. $\overline{W}^\ast$). In particular, 
they are smooth, contrary to what happened in the real setting.

In degree $2$, consider a form $\nu = \sum_{i,j = 1}^2 
\nu_{ij} \overline{\alpha}^j \wedge \alpha^i$ in $L^2(U,W^\ast \widehat{\otimes}
\overline{W}^\ast$). It is cancelled by 
$\Delta^2_\mQ$ if and only if $d^\ast_{\mQ} \nu = 0$.

\begin{lemma} \label{lemdeg2}
The $2$-form $\nu$ is harmonic if and only if for $k=1,2$,
\begin{eqnarray*}
\dive \left(\sum_{l=1}^2 (\nu, \overline{\alpha}^l \wedge \alpha^k) X_l \right) 
&= 0,\\
\dive \left(\sum_{l=1}^2 (\nu, \overline{\alpha}^k \wedge \alpha^l) 
\overline{X}_l\right) &= 0.
\end{eqnarray*}
%where $k=1,2$.
\end{lemma}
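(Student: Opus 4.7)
The plan is to mimic the proof of Lemma \ref{lemdeg1} one bidegree higher, relying on two structural observations. First, since $W$ is a holomorphic rank-$2$ distribution, $F_\varphi = \Lambda^2 W^*$ exhausts the top holomorphic degree, so by (\ref{cc2.4}) the bundles $F_{\varphi,\bot}$ and $\ov F_{\varphi,\bot}$ are each concentrated in degrees $0$ and $1$; hence $\mQ^{p,q} = 0$ as soon as $\max(p,q)\geq 2$, and in particular $\mQ^k = 0$ for $k\geq 3$. Thus $d_\mQ\nu\in\mQ^3$ is automatically zero, $\Delta^2_\mQ\nu = d_\mQ d^*_\mQ\nu$, and pairing with $\nu$ gives $\|d^*_\mQ\nu\|^2 = 0$, so $\nu$ is harmonic iff $d^*_\mQ\nu = 0$. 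Splitting by bidegree, $d^*_\mQ\nu = \d^*_\mQ\nu + \bar{\d}^*_\mQ\nu$ with $\d^*_\mQ\nu\in\mQ^{0,1}$ and $\bar{\d}^*_\mQ\nu\in\mQ^{1,0}$, so this is equivalent to both components vanishing.

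To turn $\bar{\d}^*_\mQ\nu = 0$ into the first divergence identity I pair against arbitrary smooth compactly supported $f\alpha^k \in \mQ^{1,0}$:
\begin{align*}
(\bar{\d}^*_\mQ\nu, f\alpha^k) = (\nu, \bar{\d}(f\alpha^k)) = (\nu, \bar{\d}f\wedge\alpha^k) + (\nu, f\,\bar{\d}\alpha^k).
\end{align*}
Exactly as in Lemma \ref{lemdeg1}, $[X_l,\bar X_j] = 0$ gives $\bar{\d}\alpha^k(X_l,\bar X_j) = -\alpha^k([X_l,\bar X_j]) = 0$, so the $\mQ^{1,1} = \Lambda^{1,1}W^*$-component of $\bar{\d}\alpha^k$ vanishes and the second term drops out. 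In the first term only the $W^{*(0,1)}$-part $\sum_j\bar X_j(f)\bar\alpha^j$ of $\bar{\d}f$ pairs nontrivially with $\nu$; using conjugate-linearity of the Hermitian pairing in the second slot together with the identity $\overline{\bar X_j(f)} = X_j(\bar f)$, and integrating by parts for the complex vector field $X_j$ as at the end of the proof of Lemma \ref{lemdeg1}, I obtain
\begin{align*}
(\bar{\d}^*_\mQ\nu, f\alpha^k) = -\int_X \bar f\cdot\dive\Bigl(\sum_{l=1}^2 (\nu, \bar\alpha^l\wedge\alpha^k)\,X_l\Bigr)\,dv_X,
\end{align*}
and letting $f$ vary gives the first divergence equation.

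The condition $\d^*_\mQ\nu = 0$ is symmetric: test against $f\bar\alpha^k\in\mQ^{0,1}$, kill the $(\nu,f\,\d\bar\alpha^k)$ term using $\d\bar\alpha^k(X_l,\bar X_j) = -\bar\alpha^k([X_l,\bar X_j]) = 0$, keep only the $W^{*(1,0)}$-part $\sum_l X_l(f)\alpha^l$ of $\d f$, use the sign from $\alpha^l\wedge\bar\alpha^k = -\bar\alpha^k\wedge\alpha^l$ together with the identity $\overline{X_l(f)} = \bar X_l(\bar f)$, and integrate by parts against $\bar X_l$ to obtain the second divergence identity. The main obstacle is purely bookkeeping: tracking the signs from the graded Leibniz rule and the anticommutation of $\alpha^l$ and $\bar\alpha^k$, the conjugation in the Hermitian pairing, and the complex divergence formula $\int X(g)h\,dv_X = -\int g\,(X(h) + h\dive X)\,dv_X$. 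All substantive ingredients, namely the vanishing of the $\bar{\d}\alpha^k$ and $\d\bar\alpha^k$ contributions via the holomorphic/antiholomorphic commutator and the divergence form of the adjoint, have already appeared in Lemma \ref{lemdeg1}; the only new degree-two input is the degree count $\mQ^3 = 0$, which kills $d_\mQ\nu$ and reduces harmonicity to the adjoint equation.
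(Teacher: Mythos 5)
Your proof is correct and follows essentially the same route as the paper: reduce harmonicity to $d^*_\mQ\nu=0$ via the vanishing of $\mQ^3$, split into the $\d^*_\mQ$ and $\bar{\d}^*_\mQ$ components by bidegree, and test against $(1,0)$- and $(0,1)$-forms using the formula for $\bar{\d}_\mQ$ of a $(1,0)$-form from Lemma \ref{lemdeg1} followed by integration by parts against the complex vector fields. The only cosmetic differences are that you re-derive the vanishing of the $\bar{\d}\alpha^k$ contribution instead of citing Lemma \ref{lemdeg1}, and you spell out the degree-count reduction that the paper states as setup just before the lemma.
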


\begin{proof}
We just show that the first equation is equivalent to 
the vanishing of $\bar{\d}^\ast_\mQ \nu$. 
For every $(1,0)$-form $\mu = \mu_1 \alpha^1 + \mu_2 \alpha^2$,
\begin{align*}
(\bar{\d}^\ast_\mQ \nu, \mu) &= (\nu, \bar{\d}_\mQ \mu) \\
&= (\nu, \sum_{k,l = 1}^2 \overline{X}_l(\mu_k) 
\overline{\alpha}^l \wedge \alpha^k) 
\text{ by Lemma \ref{lemdeg1}} \\
&= \sum_{k,l=1}^2 \int_M X_l(\overline{\mu}_k)
(\nu,\overline{\alpha}^l \wedge \alpha^k) dv_X \\
&= -\sum_{k,l=1}^2 \int_M \overline{\mu}_k 
\dive((\nu,\overline{\alpha}^l \wedge \alpha^k)X_l) dv_X.
\end{align*}
This is zero for all $\mu$ if and only if $\dive(\sum_{l=1}^2 
(\nu, \overline{\alpha}^l \wedge \alpha^k) X_l)$ is $0$ 
for $k=1,2$.
\end{proof}

It seems difficult to unravel these equations in general. 
We will only consider two different choices of the metric.

\paragraph{\textbf{Standard metric}} In the particular case 
where the metric on $\C^3$ is the standard one, one can 
choose for $X_1,X_2$ orthogonal holomorphic vectors with 
zero divergence (take $X_1 = \frac{\d}{\d p}$ and 
$X_2 = p\frac{\d}{\d u} + \frac{\d}{\d x}$. Moreover, 
$X_1$ has norm $1$. Then the equations of Lemma \ref{lemdeg2} 
become
\begin{eqnarray*}
\sum_l X_l (\nu_{lk} |\overline{\alpha}^l \wedge \alpha^k|^2) = 0,\\
\sum_l \overline{X}_l( \nu_{kl} |\overline{\alpha}^k 
\wedge \alpha^l|^2) = 0,
\end{eqnarray*}
where $\nu = \nu_{lk} \overline{\alpha}^l \wedge \alpha^k$. 
One can take $\nu_{12} = \nu_{21} = \nu_{22} = 0$. 
Then the equations are simply 
$X_1 \nu_{11} = \overline{X}_1 \nu_{11} = 0$. 
Since $X_1$ is holomorphic, $[X_1,\overline{X}_1] = 0$ and, 
by Frobenius theorem, one can choose $\nu_{11}$ constant
in the directions of $X_1$ and $\overline{X}_1$ but $\nu_{11}$ 
not smooth. This shows the existence of non-smooth harmonic 
$2$-forms. \\

\paragraph{\textbf{Heisenberg metric}} Consider the case where
we see $\C^3$ as the complex Heisenberg group 
(see Example \ref{exataylor}), endowed with a right-invariant
Hermitian metric and with a right-invariant contact form. 
Choose a basis $(X'_1,X'_2)$ of $W$ 
at the identity $e$ of $\mH^3$. Consider the corresponding
right-invariant vector fields on $\mH^3$, denoted by 
$\widetilde{X'_1}$ and $\widetilde{X'_2}$. Since these vector fields 
and the volume form are right-invariant, the divergences of 
$\widetilde{X'_1}$ and $\widetilde{X'_2}$ are constant. 
Hence, to certain linear combination $X_1$ of $X'_1$ 
and $X'_2$ corresponds a right-invariant vector field 
with zero divergence. We can moreover assume that $X_1$ 
is a unit vector and complete it to an orthonormal basis 
$(X_1,X_2)$ of $W_e$. Thus we get a holomorphic 
orthonormal frame ($\widetilde{X_1},\widetilde{X_2})$ of $W$
and $\widetilde{X_1}$ has zero divergence. Then, 
the same argument as above shows the existence 
of non-smooth harmonic $2$-forms.
\end{exa}

\begin{rem}
This last example with the Heisenberg metric is in fact the prototype 
of the situation described in Conjecture \ref{conjGr}. 
Indeed, take $\Gamma$ a cocompact subgroup of the complex 
Heisenberg group. Then, by invariance of the metric and of the 
distribution,  we obtain a counterexample to the hypoellipticity 
of the characteristic Laplacian in a compact complex contact $3$-manifold.
 Those manifolds are the simplest examples of period domains, 
 with non-trivial Griffith's transversality condition (that is, 
 with a distribution not equal to the whole tangent space).
\end{rem}

%%%%%%%%%%%%%%%%%%%%%%%%%%%%%%%		
\section{Answer to Question \ref{qtncond}}\label{s2}
%%%%%%%%%%%%%%%%%%%%%%%%%%%%%%%

The aim of this section is to prove the following theorem, 
which is an answer to question \ref{qtncond}:

\begin{thm}\label{thmprin}
In the notations of Question \ref{qtncond} and Section \ref{s1}, the characteristic 
Laplacian never respects the bigrading on $\mQ^{\bullet}$ 
when the distribution $W$ is not involutive.
\end{thm}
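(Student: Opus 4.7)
The plan is to reduce the statement to the non-vanishing of a single bidegree-shifting piece of $\Delta_\mQ$, and then to extract its obstruction from a local computation on $\mQ^{0,1}$. Decomposing $d_\mQ = \partial_\mQ + \bar\partial_\mQ$ and using $d_\mQ^2 = 0$ together with bidegree considerations yields $\partial_\mQ^2 = \bar\partial_\mQ^2 = 0$ and $\partial_\mQ \bar\partial_\mQ + \bar\partial_\mQ \partial_\mQ = 0$, hence
\begin{equation*}
\Delta_\mQ = \Box_{\partial_\mQ} + \Box_{\bar\partial_\mQ} + A + A^*,
\end{equation*}
where $\Box_{\partial_\mQ} := \partial_\mQ \partial_\mQ^* + \partial_\mQ^* \partial_\mQ$ and $\Box_{\bar\partial_\mQ} := \bar\partial_\mQ \bar\partial_\mQ^* + \bar\partial_\mQ^* \bar\partial_\mQ$ preserve the bigrading, while $A := \partial_\mQ \bar\partial_\mQ^* + \bar\partial_\mQ^* \partial_\mQ$ and its $L^2$-adjoint $A^*$ shift bigradings by $(+1,-1)$ and $(-1,+1)$ respectively. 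Thus $\Delta_\mQ$ preserves the bigrading on $\mQ^{\bullet}$ if and only if $A = 0$, and the theorem reduces to showing $A \not\equiv 0$ whenever $W$ is not involutive.

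Next, I would check that the principal symbol of $A$ already vanishes identically, so that any obstruction must lie at subleading order. In the complex analogue of Proposition \ref{propprinc}, the $(+1,-1)$-bigrading-shifting part of $\sigma_2(\Delta_\mQ,\xi)$ is $-\pi_\mQ \circ i_{(\xi_W^{0,1})^*} \circ \pi_{F_\varphi} \circ (\xi_W^{1,0} \wedge) \circ \pi_\mQ$. Starting from $\alpha \in F_{\varphi,\bot} \widehat{\otimes} \overline{F}_{\varphi,\bot}$, the decomposition \eqref{cc2.3b} shows that after this chain one lands in a tensor whose holomorphic factor lies in $F_\varphi$, and this is annihilated by the final $\pi_\mQ$ by orthogonality $F_\varphi \perp F_{\varphi,\bot}$. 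Hence $\sigma_2(A) = 0$ and $A$ is a differential operator of order at most one.

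Then I would compute $A$ on $\mQ^{0,1} = \cC^\infty(X,\overline{W}^*)$ in a local holomorphic frame $\{X_i\}$ of $W$, with dual frame $\{\alpha^i\}$ of $W^*$ chosen unitary at a fixed point $x_0 \in X$. For a test form $\alpha = g\, \overline{\alpha^{k_0}}$ with $g \in \cC^\infty(X,\C)$, the analogues of Lemmas \ref{lemdeg1} and \ref{lemdeg2} yield
\begin{equation*}
\bar\partial_\mQ^* \alpha = -\dive(g X_{k_0}), \qquad \partial_\mQ \alpha = \sum_l X_l(g)\, \alpha^l \wedge \overline{\alpha^{k_0}},
\end{equation*}
together with parallel formulas for $\partial_\mQ \bar\partial_\mQ^* \alpha$ and $\bar\partial_\mQ^* \partial_\mQ \alpha$. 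Summing and applying the Leibniz rule to the divergences, the second-order derivatives of $g$ cancel --- as forced by the vanishing of $\sigma_2(A)$ --- and the $\alpha^{k'}$-coefficient of $(A\alpha)(x_0)$ simplifies to
\begin{equation*}
[X_{k_0}, X_{k'}](g) - g\, X_{k'}(\dive X_{k_0}).
\end{equation*}

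Finally, decomposing $[X_{k_0}, X_{k'}] = \sum_l A^l_{k_0 k'} X_l + \sum_a B^a_{k_0 k'} N_a$ along the orthogonal splitting $TX = W \oplus N$, the term $\sum_a B^a_{k_0 k'}\, N_a(g)$ is the only contribution in the expression above that involves derivatives of $g$ transverse to $W$; no other term can produce such a derivative. The vanishing of $A$ at $x_0$ therefore forces $B^a_{k_0 k'}(x_0) = 0$ for all indices, which via $\varphi(\theta)(X_{k_0}, X_{k'}) = -\theta([X_{k_0}, X_{k'}])$ is exactly $\varphi(x_0) = 0$, i.e.\ $W$ involutive at $x_0$. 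Contrapositively, if $W$ is not involutive at some point then some $B^a_{k_0 k'}$ is non-zero there, and choosing $g$ with $N_a(g) \neq 0$ at that point gives $A\alpha \neq 0$, so $\Delta_\mQ$ does not preserve the bigrading. The main technical obstacle is the cancellation analysis of the third step: one must track the first-order terms carefully (using the Leibniz rule and the unitary frame at $x_0$) to verify that they reassemble exactly into the Lie bracket $[X_{k_0}, X_{k'}](g)$, leaving no extraneous contribution that could absorb its transverse component.
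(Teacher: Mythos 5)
Your argument is correct, and its skeleton is the paper's: both reduce the statement to the non-vanishing of the bidegree-shifting commutator $[\partial_\mQ,\db^*_\mQ]$ (equations (\ref{cc2.6})--(\ref{cc2.7})), and both kill its second-order symbol via the orthogonality of $F_\varphi$ and $F_{\varphi,\bot}$ (equations (\ref{cc2.18})--(\ref{cc2.20})). The decisive step, however, is carried out by a genuinely different route. The paper computes the complete first-order principal symbol of $[\partial_\mQ,\db^*_\mQ]$ on all of $\mQ$ (Lemma \ref{cct2.6}), using the Chern connection, the torsion and the second fundamental forms of $W\subset T_hX$ and of $F_\varphi\subset\Lambda W^*$, and then specializes in Remark \ref{cct2.8} to a covector $\xi$ vanishing on $W$, where only the term $(\xi,\pi_{N}[w_j,w_k])$ survives. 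You instead restrict to $\mQ^{0,1}$ and perform a bare-hands frame computation in the style of Lemmas \ref{lemdeg1} and \ref{lemdeg2}, isolating the transverse derivative of $g$ that survives in $[X_{k_0},X_{k'}](g)$ after the second-order cancellation --- which is the same obstruction $\pi_{N}[\cdot,\cdot]$ in disguise. Your route avoids the generalized sub-K\"{a}hler machinery and, incidentally, the constant-rank reduction to $X\setminus V$ of Subsection \ref{s2.3}, since in bidegrees at most $(1,1)$ the bundle $F_\varphi$ never enters and $\mQ$ is simply $\C$, $W^*$, $\ov{W}^*$, $W^*\widehat{\otimes}\ov{W}^*$; what it gives up is the full symbol formula (\ref{cc2.14}), which records all first-order contributions rather than only the one responsible for the failure. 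One point to tighten: your displayed coefficient $[X_{k_0},X_{k'}](g)-g\,X_{k'}(\dive X_{k_0})$ omits further terms (zeroth order in $g$, or involving only $W$-derivatives of $g$) coming from derivatives of the metric coefficients $(\ov{\alpha}^{k_0},\ov{\alpha}^{i})$ away from $x_0$; the clean way to conclude is to choose $g$ with $g(x_0)=0$ and $dg(x_0)$ vanishing on $W\oplus\ov{W}$ but not on $N$, which annihilates everything at $x_0$ except $\sum_a B^a_{k_0k'}N_a(g)(x_0)$ and makes the contrapositive immediate.
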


This section is organized as follows. In Subsection \ref{s2.1}, 
we show that a complex Hermitian manifold  is K\"{a}hler
if and only if the Hodge Laplacian preserves the bigrading 
on $\Omega(X)$. In Subsection \ref{s2.2}, 
we establish a generalized sub-K\"{a}hler identity.
In Subsection  \ref{s2.3},  we establish Theorem \ref{thmprin}.

%%%%%%%%%%%%%%%%%%%%%%%%%%%%%%%
\subsection{The classical case}\label{s2.1}

First we study the case where the distribution $W$ is the whole 
tangent space $TX$, which is interesting for itself. We thus have 
$\mQ = \Omega(X)$ and the characteristic Laplacian is the usual 
Hodge Laplacian, which we simply denote by $\Delta$. 
Remark that Theorem \ref{thmprin} says nothing in this case.

It is well known that for a K\"{a}hler manifold, its Hodge Laplacian 
preserves the bigrading of the differential forms 
(cf. \cite[\S0.7]{GriffithsHarris}, \cite[Corollary 1.4.13]{MM07}). 
This implies the decomposition of the complex valued 
de-Rham cohomology in bidegree type for 
a compact K\"{a}hler manifold; this was in fact the initial interest of 
the authors for the general question \ref{qtncond}. 
In \cite[\S III. A]{GrGrKe10}, 
Green, Griffiths and Kerr claimed that Chern \cite{Chern57} 
proved that for Hermitian manifolds,  if its Hodge Laplacian 
preserves the bigrading of the differential forms, 
then the Hermitian metric is K\"{a}hler. 
After communications with Professors Bryant and Griffiths, 
we realized that Chern did not claim this result in his paper 
\cite{Chern57}, and it seems that one could not find a proof 
in the literature. 

\begin{thm}\label{almt2.3} 
The complex Hermitian manifold $(X,J, \Theta)$ is K\"{a}hler
if and only if $\Delta$ preserves the bigrading on $\Omega(X)$,
i.e., $\Delta$ sends $(p,q)$-forms to $(p,q)$-forms.
\end{thm}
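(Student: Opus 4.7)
The forward direction is classical: on a K\"{a}hler manifold the identity $[\Lambda,\bar{\d}]=-\sqrt{-1}\,\d^*$ gives $\Delta=2\Delta_{\bar{\d}}$, which visibly preserves bigrading.

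For the converse, my first step would be to expand $d=\d+\bar{\d}$ and write
$$
\Delta=\Delta_{\d}+\Delta_{\bar{\d}}+A+A^{*},\qquad A:=\{\d,\bar{\d}^{*}\},\quad A^{*}=\{\bar{\d},\d^{*}\},
$$
where $A$ has bidegree $(+1,-1)$ and $A^{*}$ has bidegree $(-1,+1)$, while $\Delta_{\d}$ and $\Delta_{\bar{\d}}$ preserve bigrading. The hypothesis on $\Delta$ then forces $A\equiv 0$ (equivalently $A^{*}\equiv 0$).

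Next I would invoke Demailly's generalization of the K\"{a}hler identities to any Hermitian manifold: with $\bar{\tau}:=[\Lambda,\bar{\d}\Theta\wedge\cdot\,]$ a zeroth-order algebraic operator depending linearly on $\bar{\d}\Theta$,
$$
\bar{\d}^{*}=-\sqrt{-1}\,[\Lambda,\d]-\bar{\tau}^{*}.
$$
Substituting this into $A$ and using $\d^{2}=0$ (which makes $\{\d,[\Lambda,\d]\}=\d\Lambda\d-\d^{2}\Lambda+\Lambda\d^{2}-\d\Lambda\d=0$), the expression collapses to
$$
A=-\{\d,\bar{\tau}^{*}\},
$$
so the hypothesis reduces to the first-order equation $\{\d,\bar{\tau}^{*}\}\equiv 0$.

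The main obstacle is to extract $\bar{\d}\Theta\equiv 0$ from this anti-commutator equation. Since $\bar{\tau}^{*}$ is zeroth-order and $\d$ is first-order, $\{\d,\bar{\tau}^{*}\}$ is a first-order operator with principal symbol $\{\sqrt{-1}\,\xi^{1,0}\wedge,\bar{\tau}^{*}\}$ at a real covector $\xi$. The plan is to work pointwise at an arbitrary $x_{0}\in X$ in a $g^{TX}$-unitary coframe, to write $\bar{\tau}^{*}_{x_{0}}$ explicitly as a linear function of the components of $(\bar{\d}\Theta)_{x_{0}}$, and to verify by direct algebra that vanishing of the symbol $\{\sqrt{-1}\,\xi^{1,0}\wedge,\bar{\tau}^{*}\}$ for every $\xi^{1,0}\in T^{*(1,0)}_{x_{0}}X$ forces each such component to be zero. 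Running the parallel computation with $A^{*}\equiv 0$ then yields $\d\Theta\equiv 0$, so $d\Theta=\d\Theta+\bar{\d}\Theta=0$ and $(X,J,\Theta)$ is K\"{a}hler.
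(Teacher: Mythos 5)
Your proposal is correct and follows essentially the same route as the paper: reduce the hypothesis to $[\partial,\db^*]=0$, substitute the Hermitian-torsion (Demailly) form of the K\"ahler identity, kill $[\partial,[\Lambda,\partial]]$ via the Jacobi identity and $\partial^2=0$, and extract $\db\Theta=0$ (hence $d\Theta=0$) from the principal symbol of the remaining first-order operator $\{\partial,\bar\tau^*\}$. The one step you leave as a plan --- the pointwise linear algebra showing that vanishing of that symbol for all $\xi$ forces every component of $\db\Theta$ to vanish --- is precisely the computation the paper carries out in \eqref{herm25}--\eqref{herm28} using the explicit expression for the torsion operator, and it succeeds.
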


We first introduce some notations from \cite{MM07}.

For any $\Z_2$-graded vector space $V= V^+\oplus V^-$, 
the natural $\Z_2$-grading on $\End(V)$ is defined by
$$\End(V)^+= \End(V^+)\oplus \End(V^-),\quad
\End(V)^-= \Hom(V^+,V^-)\oplus \Hom(V^-,V^+),
$$
and we define $\deg B=0$ for $B\in\End(V)^+$, 
and $\deg B=1$ for $B\in\End(V)^-$. For $B,C\in \End(V)$, 
we define their supercommutator (or graded Lie bracket) by
\begin{equation}\label{alm1.35}
[B,C]=BC-(-1)^{\deg B\cdot \deg C}CB.
\end{equation}
Then for $B, B^\prime, C\in \End(V)$, the Jacobi identity holds:
\begin{multline} \label{jacobi}
(-1)^{\deg C\cdot \deg B^\prime}\big[B^\prime,[B,C]\big]
+(-1)^{\deg B^\prime \cdot\deg B}\big[B,[C,B^\prime]\big]\\
+(-1)^{\deg B\cdot\deg C}\big[C,[B^\prime,B]\big]=0.
\end{multline}
We will apply the above notation for $\Omega^{\bullet}(X)$ 
with natural $\Z_2$-grading induced by the parity of the degree,
(cf. \cite[(1.3.31)]{MM07}).

We define the Lefschetz operator $L=\Theta\,\wedge$ on 
$\Lambda^{\bullet,\bullet}(T^*X)$ and its adjoint $\Lambda=i(\Theta)$ with 
respect to the Hermitian product 
$\langle \cdot,\cdot\rangle_{\Lambda^{\bullet,\bullet}}$
induced by $g^{TX}$. For $\{w_j\}_{j=1}^{m}$ a local orthonormal 
frame of $T^{(1,0)}X$, we have
\begin{equation}\label{herm15,1}
L=\sqrt{-1}  \sum_{j=1}^{m}w^j\wedge \ov{w}^j\wedge\,,\quad 
\Lambda=-\sqrt{-1} \sum_{j=1}^{m} i_{\ov{w}_j}i_{w_j}\,,
\end{equation}
where $\wedge$ and $i$ denote the exterior and interior product, 
respectively. The  Hermitian torsion operator is defined by
\begin{equation}\label{herm18}
\mathcal{T}:=[\Lambda,\partial\Theta]
=[i(\Theta),\partial\Theta]\,.\index{$\mathcal{T}$}
\end{equation}

\begin{proof}[Proof of Theorem \ref{almt2.3}]
If  $(X,J, \Theta)$ is K\"{a}hler, then it is a classical result that 
$\Delta$ preserves the bigrading on $\Omega^\bullet(X)$, 
cf. for example \cite[Corollary 1.4.13]{MM07} for a proof.

We assume now that $\Delta$ preserves the bigrading on 
$\Omega^\bullet(X)$.

Let $\ov{\square}:=\partial\partial^*+\partial^*\partial$;
$\square:=\ov{\partial}\,\ov{\partial}^*+\ov{\partial}^*\ov{\partial}$ 
be the usual $\partial$-Laplacian and $\ov{\partial}$-Laplacian. 
Then as $d=\partial+ \db$ and $d^2=0$, 
 we have (cf. \cite[1.4.50)]{MM07}) 
\begin{equation}\label{lm2.65}
\Delta=[d,d^*]=[\partial+\db,\partial^*+\db^*]
=\square+\ov{\square}+
[\partial,\db^*]+[\db,\partial^*].
\end{equation}
As $\square$, $\ov{\square}$ preserve the bigrading on 
$\Omega^{\bullet}(X)$,
and $[\partial,\db^*]: \Omega^{\bullet,\bullet}(X)
\to \Omega^{\bullet+1,\bullet-1}(X)$, 
we know that $\Delta$ preserves the bigrading on
$\Omega^{\bullet}(X)$ if and only if
\begin{equation}\label{lm2.65a}
[\partial,\db^*]=0.
\end{equation}

By the generalized K\"{a}hler identities \cite[(1.4.38d)]{MM07}
(cf. \cite{Demally86}) for $E=\C$ therein, we get
\begin{align}\label{herm19b}
\big[\Lambda,\partial\big]=
&\sqrt{-1}\big(\, \db^{*}+ \overline{\mathcal{T}}^*\, \big).
\end{align}
From \eqref{herm19b}, we get
\begin{equation}\label{herm20}
\Big[\partial,\db^*\Big]
=- \sqrt{-1}\Big[\partial,\big[\Lambda,\partial\big]\Big]
+ \Big[\partial,\overline{\mathcal{T}}^*\Big].
\end{equation}
But by  \eqref{jacobi}, we get
\begin{equation}\label{herm21}
\Big[\partial,\big[\Lambda,\partial\big]\Big]
=\Big[\Lambda,\big[\partial,\partial\big]\Big]+ 
\Big[\partial,\big[\partial, \Lambda\big]\Big].
\end{equation}
As $\big[\partial,\partial\big]=2 \partial^2=0$
and $\big[\partial, \Lambda\big]=- \big[\Lambda,\partial\big]$,
we get from  \eqref{herm21} that
\begin{equation}\label{herm22}
\Big[\partial,\big[\Lambda,\partial\big]\Big]=0.
\end{equation}
From \eqref{herm20} and \eqref{herm22}, we know that 
 \eqref{lm2.65a} is equivalent to 
\begin{equation}\label{herm23}
\big[\, \db^*, \mathcal{T}\big]=0.
\end{equation}
By \cite[(1.4.9)]{MM07},  the operator $\db^*$ has the form 
 $\db^* = -\sum_{j}i_{\ov{w}_j}\wi{\nabla}^{TX}_{w_j} 
 + 0\text{-order terms}$,
 here $\wi{\nabla}^{TX}$ is certain connection on 
 $\Lambda (T^*X)$, thus $\big[\, \db^*, \mathcal{T}\big]$
is a first order differential operator, and its principal symbol 
$\sigma$ is: for $\xi\in T^*X$, 
\begin{align}\label{herm25}
\sigma(\xi) = -\sqrt{-1}\sum_{j}(\xi, w_i)\cdot
\big[i_{\ov{w}_i}, \mathcal{T}\big].
\end{align}
%as $\db^* = -i_{\ov{w}_j}\nabla^{TX}_{w_j} 
%+ 0-\text{order term}$

By \cite[Lemma 1.4.10]{MM07}, 
\begin{equation}\label{herm18,1}
\mathcal{T}=-\frac{\sqrt{-1}}{2}\, 
\sum_{jkl} (\partial\Theta) (w_j,w_k, \ov{w}_l)
%\langle T(w_j,w_k),\ov{w}_m\rangle
\Big[2\,w^k\wedge \ov{w}^l\wedge i_{\ov{w}_j}-2\,\delta_{jl} w^k
-w^j\wedge w^k\wedge i_{w_l}\Big]\,.
\end{equation}
From \eqref{herm18,1}, we get
\begin{multline}\label{herm27}
\big[i_{\ov{w}_i}, \mathcal{T}\big] 
= \sqrt{-1}\,\sum_{jkl} (\partial\Theta) (w_j,w_k, \ov{w}_l)
w^k\wedge \big[i_{\ov{w}_i},\ov{w}^l\big]\wedge i_{\ov{w}_j}\\
= \sqrt{-1}\,\sum_{jk} (\partial\Theta) (w_j,w_k, \ov{w}_i)w^k
\wedge i_{\ov{w}_j}.
\end{multline}
By \eqref{herm25} and \eqref{herm27},
the equation \eqref{herm23}
implies that 
\begin{align}\label{herm28}
\partial\Theta=0. 
\end{align}
Thus $\db\Theta=\ov{\partial\Theta}=0$ and $d \Theta=0$.
This means that 
if $\Delta$ preserves the bigrading on $\Omega^\bullet(X)$,
then $(X,J,\Theta)$ is K\"{a}hler.
\end{proof}

\begin{rem}\label{Weitzt1}
After we sent our preliminary version to 
Professor Bryant, he sent us an easier proof which works 
also in the almost-complex case. Here is the argument:

Let $(X,J, \Theta)$ be an almost complex manifold 
with almost complex structure $J$ and $\Theta$
a real $(1,1)$-form as in  \eqref{lm02.12}.
We suppose that the Hodge Laplacian $\Delta$ preserves 
the bigrading. 
In fact, we may only suppose that $\Delta$ sends $(0,1)$-forms 
to $(0,1)$-forms. In particular, $\Delta$ commutes with 
$J : TX \rightarrow TX$. Using the following lemma, 
this implies that $J$ is parallel with respect to 
the Levi-Civita connection $\nabla^{TX}$ on $(TX, g^{TX})$.
It is well-known (cf. \cite{MM07}) that this condition is equivalent 
to the metric being K\"{a}hler.

\begin{lemma}\label{Weitzt2}
Let $(X,g^{TX})$ be a %compact oriented 
Riemannian manifold 
and $L\in \cC^\infty(X,$ $\End(T^*X))$.
%One still denote by $L\in \cC^\infty(X, \End(T^*X))$
%and let $L: TX \rightarrow TX$ be a bundle map. 
%One still denote by $L : T^\ast X \rightarrow T^\ast X$ 
%its dual map. 
If $L$ commutes with the Hodge Laplacian $\Delta$
on $1$-forms, then $L$ is parallel  with respect to 
the Levi-Civita connection $\nabla^{TX}$.
\end{lemma}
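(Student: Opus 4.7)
The plan is to deduce $\nabla L = 0$ by reading off the first covariant derivative of $L$ directly from the principal symbol of the commutator $[\Delta, L]$.

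The key observation is that the order two principal symbol of the Hodge Laplacian on $1$-forms is the \emph{scalar} endomorphism $|\xi|^2 \cdot \Id_{T^*X}$. Since $L$ is a zeroth order operator (a pointwise endomorphism of $T^*X$), the formal order two symbol of $[\Delta, L]$ is $[|\xi|^2 \Id_{T^*X}, L] = 0$, and hence $[\Delta, L]$ is a differential operator of order at most one. All the content therefore lies in its first order principal symbol.

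I would compute this first order symbol using the Bochner-Weitzenb\"ock identity $\Delta = \nabla^* \nabla + \mathrm{Ric}$ on $1$-forms. The endomorphism term $\mathrm{Ric}$ contributes only a zeroth order commutator with $L$, so the first order principal symbol of $[\Delta, L]$ coincides with that of $[\nabla^* \nabla, L]$. Working in a local orthonormal frame $(e_i)$ of $TX$ synchronous at a point $x_0$ (so that $(\nabla_{e_i} e_j)_{x_0} = 0$), one has $\nabla^* \nabla = -\sum_i \nabla_{e_i}^2$ modulo first order operators at $x_0$. The product rule gives
$$[\nabla_{e_i}^2, L] \;=\; (\nabla_{e_i}^2 L) \,+\, 2\, (\nabla_{e_i} L)\, \nabla_{e_i},$$
and passing to the first order principal symbol at $\xi \in T^*_{x_0} X$ yields
$$\sigma_1([\Delta, L], \xi) \;=\; -2 \sum_i \xi(e_i)\, \nabla_{e_i} L \;=\; -2\, \nabla_{\xi^\sharp} L,$$
where $\xi^\sharp \in T_{x_0}X$ denotes the metric dual of $\xi$.

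The hypothesis $[\Delta, L] = 0$ then forces $\nabla_{\xi^\sharp} L = 0$ for every $\xi \in T^*X$ and every point $x_0$, hence $\nabla_Y L = 0$ for all vector fields $Y$, which is precisely the conclusion. The only real step is the symbol computation above; everything else is formal. I do not expect a serious obstacle beyond careful book-keeping of signs and conventions. Conceptually the lemma reduces to the principle that when a second order operator has scalar principal symbol, the leading obstruction to commuting with a zeroth order endomorphism is exactly its first covariant derivative.
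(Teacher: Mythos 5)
Your proof is correct and follows essentially the same route as the paper: both arguments invoke the Weitzenb\"{o}ck formula to see that $\sigma_2(\Delta)$ is scalar, hence $[\Delta,L]$ is first order, and then identify its first order principal symbol with $-2\nabla_{\xi^\sharp}L$ (the paper computes this symbol via the limit $\lim_{t\to\infty}t^{-1}e^{-itf}[\Delta,L]e^{itf}$ rather than a synchronous frame, which is an equivalent computation). No gap.
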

\begin{proof}
Let $\nabla^{\Lambda (T^*X)}$ (resp. $\nabla^{\End (T^*X)}$)
be the connection on $\Lambda (T^\ast X)$ (resp. $\End (T^*X) $) 
induced by the Levi-Civita connection 
$\nabla^{TX}$ on $(TX, g^{TX})$. 

For $\nabla^F$ a connection on a vector bundle $F$, 
let $\Delta^{F}$ be
the Bochner Laplacian on $F$ associated to $\nabla^{F}$. 
By Definition, 
for  $\{e_{j}\}_{j}$ an orthonormal frame of $(TX,g^{TX})$, we have
\begin{align}\label{Weitz1.1}
\Delta^{F}
= -\sum_{j} \left[\left(\nabla^{F}_{e_{j}}\right)^2
-\nabla^{F}_{\nabla^{TX}_{e_{j}}e_{j}}\right].
\end{align}
%We denote by $\nabla^\ast$ its adjoint for the natural 
%$L^2$-structures. 
\comment{Let $\Delta^{\Lambda (T^*X)}$ be the Bochner Laplacian 
on $\Lambda (T^\ast X)$ associated to 
$\nabla^{\Lambda (T^*X)}$. By Definition, 
for  $\{e_{j}\}_{j}$ an orthonormal frame of $(TX,g^{TX})$, we have
\begin{align}\label{Weitz1.2}
\Delta^{\Lambda (T^*X)}
= -\sum_{j} \left[\left(\nabla^{\Lambda (T^*X)}_{e_{j}}\right)^2
-\nabla^{\Lambda (T^*X)}_{\nabla^{TX}_{e_{j}}e_{j}}\right].
\end{align}
}

As $\nabla^{\Lambda (T^*X)}$  preserves  the $\Z$-grading 
on $\Lambda (T^*X)$, we know the Bochner Laplacian 
$\Delta^{\Lambda (T^*X)}$ on $\Lambda (T^\ast X)$ 
associated to $\nabla^{\Lambda (T^*X)}$,
also preserves the $\Z$-grading 
on $\Omega(X)$.
One can relate $\Delta$ and $\Delta^{\Lambda (T^*X)}$ 
by the Weitzenb\"{o}ck formula 
(cf. \cite[\S 3.6]{BeGeVe}). In particular, 
if $\alpha\in \Omega^1(X)$,
one has the equality
\begin{align}\label{Weitz}
\Delta \alpha =\Delta^{T^*X} \alpha + {\rm Ric} \,\alpha,
\end{align}
where the Ricci curvature Ric  is identified with a section
of the bundle $\End(T^*X)$ by means of $g^{TX}$.
%the Riemannian metric 
%where $T : T^\ast X \rightarrow T^\ast X$ is some bundle map. 

By  (\ref{Weitz1.1}) and  (\ref{Weitz}), 
the principal symbol $\sigma_{2}(\Delta)$
of $\Delta$ is 
$\sigma_{2}(\Delta)(\xi)=|\xi|^2 \Id_{\Lambda (T^*X)}$
for $\xi\in T^*X$. Thus 
$\sigma_{2}(\Delta L - L \Delta)=0$ and 
$\Delta L - L \Delta$ is a first order differential operator.
 We now compute the principal symbol $\sigma_{1}(\Delta L 
 - L \Delta)$ by computing 
    $\lim_{t\to \infty} t^{-1}e^{-i t f} (\Delta L - L \Delta)e^{i t f}$ 
when $t\to + \infty$ for any $f\in \cC^\infty(X)$.
By   (\ref{Weitz1.1}) and  (\ref{Weitz}), we know
for any  $s\in\cC^\infty(X, T^*X)$
  \begin{align} \label{Weitz1.5}
\sigma_{1}(\Delta L - L \Delta)(df) s
=  \lim_{t\to \infty} t^{-1}e^{-i t f} (\Delta L - L \Delta)e^{i t f}  s
= -2i (\nabla^{\End (T^*X)}_{e_j}L) e_j(f) s.
  \end{align}

By assumption, one has 
\begin{align}\label{Weitz1.4}
\Delta L - L \Delta= 0.
\end{align}
This implies $\sigma_{1}(\Delta L - L \Delta)=0$.
Thus from (\ref{Weitz1.5}), we know (\ref{Weitz1.4})
implies 
 \begin{align} \label{Weitz1.6}
\nabla^{\End (T^*X)}L =0.
  \end{align}
The proof of Lemma \ref{Weitzt2} is completed.
\end{proof}

\comment{
\begin{proof}
Let $\nabla : \cC^\infty(X,\Lambda T^\ast X) 
\rightarrow \cC^\infty(X, T^\ast X \otimes \Lambda T^\ast X)$ 
be the Levi-Civita connection on $\Lambda T^\ast X$. 
We denote by $\nabla^\ast$ its adjoint for the natural 
$L^2$-structures. Then, one can relate $\Delta$ and 
$\nabla^\ast \nabla$ by the Weitzenb\"{o}ck formula 
(cf. \cite{BeGeVe}). In particular, if $\alpha$ is in 
$\cC^\infty(X,T^\ast X)$, one has the equality
\begin{align}\label{Weitz}
\Delta \alpha = \nabla^\ast \nabla \alpha + T\alpha,
\end{align}
where $T : T^\ast X \rightarrow T^\ast X$ is some bundle map. 
By assumption, one has $(\Delta L - L \Delta)\alpha = 0$. 
Hence, by (\ref{Weitz}), for any 
$\beta \in \cC^\infty(X, T^\ast X)$, denoting the Riemannian 
volume form by dvol,
\begin{eqnarray*}
0 &=& \int_X \langle \nabla^\ast \nabla (L\alpha) 
- L\nabla^\ast\nabla\alpha + (TL-LT)\alpha,\beta \rangle \,
\text{dvol} \\
 &=& \int_X \langle \nabla(L\alpha), \nabla \beta \rangle
 - \langle \nabla \alpha, \nabla(L^\ast \beta) \rangle 
 + \langle (TL-LT)\alpha, \beta \rangle \,\text{dvol}\\
 &=& \int_X \langle (\nabla L)\alpha
 + L\nabla \alpha, \nabla \beta \rangle
 - \langle \nabla \alpha, (\nabla L^\ast) \beta
 + \nabla \alpha, L^\ast \nabla \beta \rangle 
 + \langle (TL-LT)\alpha, \beta \rangle\,\text{dvol}\\
 &=& \int_X \langle (\nabla L)\alpha, \nabla \beta\rangle 
 -\langle \nabla \alpha, (\nabla L^\ast)\beta) \rangle 
 + \langle (TL-LT)\alpha,\beta \rangle\,\text{dvol},
\end{eqnarray*}
where $L^\ast: T^\ast X \rightarrow T^\ast X$ is the adjoint 
map of $L : T^\ast X \rightarrow T^\ast X$.

Given $f$ in $\cC^\infty(X, \R)$, we replace $\alpha$ by 
$\alpha e^{tf}$ and $\beta$ by $\beta e^{-tf}$ in this 
identity and get
\begin{equation*}
0 = \int_X \langle (\nabla L)\alpha, \nabla\beta 
+ t df \otimes \beta \rangle - \langle \nabla \alpha 
- t df \otimes \alpha, (\nabla L^\ast)\beta \rangle 
+ \langle (TL - LT)\alpha, \beta \rangle\,\text{dvol}.
\end{equation*}

Dividing by $t$ and taking the limit when $t$ goes to infinity, 
we get
\begin{equation*}
0 = \int_X \langle (\nabla L)\alpha, df \otimes \beta \rangle
+ \langle df\otimes \alpha, (\nabla L^\ast)\beta \rangle \,
\text{dvol}
\end{equation*}

Denoting by $Y$ the metric dual of $df$, we thus get
\begin{equation*}
0 = \int_X \langle (\nabla_Y L)\alpha, \beta \rangle
+ \langle \alpha, (\nabla_Y L^\ast)\beta\rangle\,\text{dvol}
\end{equation*}

Using that the connection is metric, it is straightforward 
to show that the two terms in this integral are equal. 
Since there is no restriction on $\alpha$, $\beta$ and $X$, 
this implies that $\nabla L = 0$.
\end{proof}
}
\end{rem}

\begin{rem}
In the first proof of Theorem \ref{almt2.3}, we use 
the generalized K\"{a}hler identity. When we began to clarify the 
situation of question \ref{qtncond}, when there is a distribution, 
we computed an analogue of the generalized K\"{a}hler identity
in this case. This is the object of the following subsection, 
which is independant from Subsection \ref{s2.3}.
\end{rem}

%%%%%%%%%%%%%%%%%%%%%%%%%%%%%%%
\subsection{A generalized sub-K\"{a}hler identity}\label{s2.2}

The Chern connection ${\nabla}^{T_hX}$ on $T_h X$ induces 
a connection on %the complex tangent bundle 
$TX$ and on the bundle 
$\Lambda^{\bullet,\bullet}(T^\ast X)$ (\cite[\S 1.2.2]{MM07}). 
This connection is denoted by $\widetilde{\nabla}^{TX}$.
In what follows, we identify $T_hX$ with $T^{(1,0)}X$ and thus 
we denote 
by $\nabla^{T^{(1,0)}X}$ the connection ${\nabla}^{T_hX}$ 
on $T^{(1,0)}X$.
For $v\in \cC^\infty(X, T^{(0,1)}X)$, we define 
$\nabla^{T^{(0,1)}X} v =\ov{\nabla^{T^{(1,0)}X}\ov{v}}$. Then 
$\widetilde{\nabla}^{TX} = \nabla^{T^{(1,0)}X}
\oplus \nabla^{T^{(0,1)}X}$.
 Moreover, we denote by $T \in \Lambda^2(T^\ast X) \otimes TX$ 
the torsion of $\widetilde{\nabla}^{TX}$.

By identifying $N$ in (\ref{cc1.12}) to $T_h X/W$, $N$ induces 
a holomorphic structure 
from $T_h X/W$. Let $\pi_{N}$ be the orthogonal projection
from $T_h X$ onto $N$. 
We denote by $\left\langle  ,\right\rangle$ the $\C$-bilinear form on 
$TX\otimes_{\R}\C$ induced by $g^{TX}$.

Let $h^{W}$, $h^{N}$ be the Hermitian metrics 
on $W, N$ induced by $h^{T_{h}X}$.
Let  ${\nabla}^{W}$, ${\nabla}^{N}$
be the Chern connections on $(W, h^{W})$, $(N, h^{N})$.
Then we have
\begin{align}\label{cc1.20}
    {\nabla}^{W}=\pi_W \nabla^{T_{h}X} \pi_W,
\quad {\nabla}^{N}=\pi_{N}\nabla^{T_{h}X} \pi_{N}.
\end{align}
As $W$ is a holomorphic subbundle, we know 
\begin{align}\label{cc1.21}
A= \nabla^{T_{h}X''}-  
({\nabla}^{W''}\oplus {\nabla}^{N \,''})
\in T^{*(0,1)}X\otimes \Hom (N,W).
\end{align}
The adjoint $A^{*}$ of $A$ takes values in
 $T^{*(1,0)}X\otimes\Hom (W,N)$. 
Note that for $w\in W, v\in N, U\in TX\otimes_\R \C$, we have
\begin{align}\label{cc1.22a}
    \left\langle  A^*(U)w,\ov{v} \right\rangle =
\left\langle  w, \ov{A(\ov{U})(v)}\right\rangle.
\end{align}
  Then, under the decomposition 
$T_{h}X= W\oplus N$, we have
\begin{align}\label{cc1.22}
\nabla^{T_{h}X} = \begin{pmatrix}  {\nabla}^{W} & A\\ 
	-A^{*}& {\nabla}^{N} \end{pmatrix}.
\end{align}

Let $\wi{\nabla}^W$, $\wi{\nabla}^N$ be the connection on 
$W_{\R}$, $N_{\R}$  induced by $\nabla^{W}$, $\nabla^{N}$ 
as above or as in \cite[(1.2.35)]{MM07}. Set 
\begin{align}\label{cc1.23}
{^\oplus\nabla}^{TX}= \wi{\nabla}^W\oplus \wi{\nabla}^N.
\end{align}
%Let $\wi{\nabla}^W$ be the connection on
%$\Lambda^{\bullet,\bullet}(W_\R^*)$
%induced by $\nabla^W$ as in \cite[\S 1.2.2]{MM07}.
Let $\wi{\nabla}^W$, $^\oplus\wi{\nabla}^{TX}$ be the 
connections on  $\Lambda(W_\R^*)$, $\Lambda(T^*X)$
induced by $\nabla^W$, $^\oplus\nabla^{TX}$ 
as in \cite[\S 1.2.2]{MM07}, respectively.

Let $\{w_{j}\}_{j=1}^m$ be an orthonormal frame of $T^{(1,0)}X$ 
such that $\{w_{j}\}_{j=1}^{n}$ is an orthonormal frame of $W$.
Then by \cite[Lemma 1.4.4]{MM07}, we have
\begin{align}\begin{split}\label{lm2.20c}
\partial&=\sum_{j=1}^{m}w^j\wedge\widetilde{\nabla}^{TX}_{w_j}
+\frac{1}{2} \sum_{j,k,l=1}^{m}\langle T(w_j,w_k),
\ov{w}_l\rangle w^j\wedge w^k\wedge i_{w_l}\, ,
\end{split}
\end{align}
and 
\begin{align}\label{cc1.26}\begin{split}
\overline{\partial}^{*}   &= - \sum_{j=1}^{m} 
 i_{\ov{w}_j}\wi{\nabla}^{TX}_{w_j}
- \sum_{j,k=1}^{m}\langle T(w_j,w_k),\ov{w}_k\rangle  i_{\ov{w}_j}\\
 &\hspace{5mm}+\frac{1}{2}  \sum_{j,k,l=1}^{m}\langle 
 T(w_j,w_k),\ov{w}_l\rangle
\ov{w}^l\wedge i_{\ov{w}_k}\wedge i_{\ov{w}_j} .
\end{split}\end{align}
Note that for any $1\leq j,k\leq m$,  $U\in TX$, we have 
\begin{align}\label{cc1.26a}\begin{split}
&   (\wi{\nabla}^{TX}_U w^k, w_j) = -(w^k,\wi{\nabla}^{TX}_U w_j)
= -\langle\ov{w}_k,\wi{\nabla}^{TX}_U w_j\rangle 
= \langle\wi{\nabla}^{TX}_U\ov{w}_k, w_j\rangle,\\
&(\wi{\nabla}^{TX}_U \ov{w}^k, \ov{w}_j) 
= \langle\wi{\nabla}^{TX}_U w_k, \ov{w}_j\rangle.
\end{split}\end{align}
From (\ref{cc1.22}), for $1\leq k\leq m$, $n+1\leq \gamma\leq m$,
we get
\begin{align}\label{cc1.27a}\begin{split}
    \wi{\nabla}^{TX}_{w_k} \ov{w}_{\gamma}= &
    {^\oplus\nabla}^{TX}_{w_k} \ov{w}_{\gamma}
    + \sum_{j=1}^{n} \left\langle  \ov{A(\ov{w}_{k})w_{\gamma}},
    w_{j} \right\rangle  \ov{w}_{j}.
    \end{split}\end{align}
From (\ref{cc1.22}), (\ref{cc1.26a}) and (\ref{cc1.27a}),   
we know that on $\Lambda^{\bullet,\bullet}(T^*X)$,
\begin{align}\label{cc1.27}\begin{split}
\wi{\nabla}^{TX}_{w_k} 
=& {^{\oplus}\wi{\nabla}}^{TX}_{w_k} \\
&+  \sum_{\beta=n+1}^{m}\sum_{j=1}^{n} \Big[- \left\langle  
A^{*}(w_{k})w_{j}, \ov{w}_{\beta}\right\rangle   
\ov{w}^{\beta}\wedge i_{\ov{w}_{j}}
+ \left\langle  \ov{A(\ov{w}_{k})w_{\beta}},
    w_{j} \right\rangle  w^{j}\wedge i_{w_{\beta}}\Big]\\
=& {^{\oplus}\wi{\nabla}}^{TX}_{w_k} 
+  \sum_{\beta=n+1}^{m}\sum_{j=1}^{n} 
\left\langle \ov{A(\ov{w}_{k})w_{\beta}}, w_{j} \right\rangle  
\Big( - \ov{w}^{\beta}\wedge i_{\ov{w}_{j}}
+ w^{j}\wedge i_{w_{\beta}}\Big).
\end{split}\end{align}
By (\ref{cc1.17}),  (\ref{cc1.23}),  (\ref{lm2.20c}) 
and (\ref{cc1.27}), we know that
\begin{align}\label{cc1.25}\begin{split}
\partial_{W}=  \sum_{j=1}^{n}w^{j}\wedge
\left(\widetilde{\nabla}^{W}_{w_j}
+\frac{1}{2}\sum_{k,l=1}^{n}\langle T(w_j,w_k),
\ov{w}_l\rangle w^k\wedge i_{w_l}  \right).
\end{split}\end{align}

From (\ref{cc1.27}), we get for $n+1\leq \alpha\leq m$,
\begin{align}\label{cc1.28}
\pi_{W}
i_{\ov{w}_\alpha}\wi{\nabla}^{TX}_{w_\alpha} \pi_{W}
= -  \sum_{j=1}^{n}\left\langle  w_{j}, 
\ov{A(\ov{w}_{\alpha})w_{\alpha}}\right\rangle i_{\ov{w}_{j}}   .
\end{align}
From (\ref{cc1.17}), (\ref{cc1.26}),  (\ref{cc1.27}) 
and  (\ref{cc1.28}), we get
\begin{align}\label{cc1.29}\begin{split}
    \bar{\d}_{W}^{*} &=\sum_{j=1}^{n} 
\Big(-  i_{\ov{w}_j}\wi{\nabla}^{W}_{w_j}
- \sum_{k=1}^{m}\langle T(w_j,w_k),\ov{w}_k\rangle 
i_{\ov{w}_j}\Big)\\
 &\hspace{5mm}+\frac{1}{2} \sum_{j,k,l=1}^{n}
 \langle T(w_j,w_k),\ov{w}_l\rangle
\ov{w}^l\wedge i_{\ov{w}_k}\wedge i_{\ov{w}_j}
+\sum_{\alpha=n+1}^{m} \sum_{j=1}^{n}\left\langle  
w_{j}, \ov{A(\ov{w}_{\alpha})w_{\alpha}}\right\rangle 
i_{\ov{w}_{j}} \\
&=\sum_{j=1}^{n}  i_{\ov{w}_j}\Big\{- \wi{\nabla}^{W}_{w_j}
    +\frac{1}{2} \sum_{k,l=1}^{n}\langle T(w_j,w_k),\ov{w}_l\rangle
    i_{\ov{w}_k}  \wedge \ov{w}^l\\
  &\hspace{20mm}  + \sum_{\alpha=n+1}^{m} \Big(\left\langle  
    w_{j}, \ov{A(\ov{w}_{\alpha})w_{\alpha}}\right\rangle
- \langle T(w_j,w_\alpha),\ov{w}_\alpha\rangle\Big) \Big\}    .
\end{split}\end{align}
We also generalize the definition of the operators
$L = \Theta \wedge$ 
and $\Lambda$ its adjoint, by defining 
$\Theta_W \in \Lambda^{1,1} (W_{\R}^*)$ as the restriction 
to $\Lambda^{1,1} (W_{\R}^*)$ of $\Theta$. 
We thus get operators $L_W$ and $\Lambda_W$ on 
$\Lambda^{\bullet,\bullet}(W_{\R}^*)$.  By \eqref{herm15,1}, we have
\begin{align} \label{herm15a}\begin{split}
%&L=\sqrt{-1}  \sum_{j=1}^{m} w^j\wedge \ov{w}^j\wedge\,,
%\quad \Lambda=- \sqrt{-1} \sum_{j=1}^{m} 
    %i_{\ov{w}_j}i_{w_j}\,,\\
&L_{W}=\sqrt{-1}  \sum_{j=1}^{n} w^j\wedge \ov{w}^j\wedge\,,
\quad \Lambda_{W}=- \sqrt{-1}  \sum_{j=1}^{n} 
i_{\ov{w}_j}i_{w_j}\, .\\
\end{split}\end{align}
The \emph{Hermitian torsion operator} 
is defined as in \eqref{herm18} and \cite[(1.4.34)]{MM07} by
\begin{align} \label{cc1.30}
\mT_W := [\Lambda_W, \d_W\Theta_W].
\end{align}

We have the analogue of \cite[Theorem 1.4.11]{MM07}.
\begin{prop}{Generalized sub-K\"{a}hler identity}
    \begin{multline}\label{subKa}
[\Lambda_W,\d_W] =  \sqrt{-1}
\left(\bar{\d}_W^\ast + \overline{\mT}_W^\ast\right)\\
 - \sqrt{-1} \sum_{\alpha=n+1}^{m}\sum_{j=1}^{n}
\Big(\left\langle w_{j}, \ov{A(\ov{w}_{\alpha})w_{\alpha}}\right\rangle
- \langle T(w_j,w_\alpha),\ov{w}_\alpha\rangle\Big)  
i_{\ov{w}_{j}}  \,\pi_{W} .
    \end{multline}	
\end{prop}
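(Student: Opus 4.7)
The plan is to compute both sides directly in the local unitary frame $\{w_j\}_{j=1}^m$ adapted to the splitting $T_hX = W\oplus N$, in parallel with the proof of the classical generalized K\"ahler identity (Theorem 1.4.11 in \cite{MM07}). The key inputs are the explicit formulas (\ref{cc1.25}) for $\partial_W$, (\ref{cc1.29}) for $\bar\d_W^*$, and (\ref{herm15a}) for $\Lambda_W$. The strategy is to split
$$\partial_W = D_W + S_W, \qquad D_W := \sum_{j=1}^n w^j\wedge \widetilde{\nabla}^W_{w_j}, \qquad S_W := \tfrac{1}{2}\sum_{j,k,l=1}^n \langle T(w_j,w_k),\ov w_l\rangle\, w^j\wedge w^k\wedge i_{w_l},$$
and to evaluate $[\Lambda_W,D_W]$ and $[\Lambda_W,S_W]$ separately, then match the total against the right-hand side of (\ref{subKa}).

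For $[\Lambda_W,D_W]$, I would use the elementary supercommutators $[i_{w_k},w^j\wedge]=\delta_{jk}$ and $[i_{\ov w_k},w^j\wedge]=0$ to push $\Lambda_W$ past the $w^j\wedge$, producing the leading piece $-\sqrt{-1}\sum_{j=1}^n i_{\ov w_j}\widetilde{\nabla}^W_{w_j}\pi_W$. The remaining commutators $[\widetilde{\nabla}^W_{w_j},i_{\ov w_k}]$ and $[\widetilde{\nabla}^W_{w_j},i_{w_k}]$ are rewritten using the Chern compatibility identity (\ref{cc1.26a}), which produces no curvature term here because only derivatives of frame vectors appear, and these are already first-order in the frame $\{w_j\}$. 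The net outcome is $\sqrt{-1}$ times the \emph{intrinsic} first-order part of $\bar\d_W^*$ as it appears in (\ref{cc1.29}).

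For $[\Lambda_W,S_W]$, the computation is purely algebraic. By (\ref{cc1.25}) applied to $\Theta_W = \sqrt{-1}\sum_{j=1}^n w^j\wedge\ov w^j$, one has an explicit expression for $\partial_W\Theta_W$ in terms of the torsion components $\langle T(w_j,w_k),\ov w_l\rangle$. Using this, the definition (\ref{cc1.30}) of $\mT_W = [\Lambda_W,\partial_W\Theta_W]$, and the Jacobi identity (\ref{jacobi}) applied to the triple $(\Lambda_W,\partial_W,\Theta_W\wedge)$, the purely algebraic piece of $[\Lambda_W,\partial_W]$ is identified with $\sqrt{-1}\,\overline{\mT}_W^*$ after taking adjoints and complex conjugation, exactly as in the classical argument.

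Adding the two contributions gives a formula that coincides with $\sqrt{-1}(\bar\d_W^* + \overline{\mT}_W^*)$ except for precisely those terms in (\ref{cc1.29}) that carry an index $\alpha\in\{n+1,\ldots,m\}$, namely the second fundamental form contribution $\langle w_j,\ov{A(\ov w_\alpha)w_\alpha}\rangle$ and the off-$W$ torsion $\langle T(w_j,w_\alpha),\ov w_\alpha\rangle$. Moving these to the left-hand side yields exactly the correction term in the second line of (\ref{subKa}), and this completes the proof. The main obstacle is bookkeeping: the operators $\partial_W,\bar\d_W^*,\Lambda_W,\mT_W$ are intrinsic to $W$, whereas the explicit expression of $\bar\d_W^*$ in (\ref{cc1.29}) is obtained by projecting the ambient $\bar\d^*$ via (\ref{cc1.26})--(\ref{cc1.27}), a process which introduces the $A^*$ and off-$W$ torsion terms. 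Carefully tracking which terms are intrinsic and which come from the second fundamental form $A$ in (\ref{cc1.22}) is the technical core of the argument; once the two groups of terms are cleanly separated, each matching is a routine application of the supercommutator rules.
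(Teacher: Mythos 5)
Your argument is correct in outline, but it takes a genuinely different route from the paper. You propose to compute $[\Lambda_W,\d_W]$ intrinsically in the $W$-adapted frame, redoing the local-frame proof of the classical generalized K\"ahler identity with all indices restricted to $\{1,\dots,n\}$ and the connection replaced by $\wi{\nabla}^W$, and then to observe that the outcome differs from $\sqrt{-1}\big(\bar{\d}_W^*+\overline{\mT}_W^*\big)$ exactly by the $\alpha$-indexed terms of (\ref{cc1.29}) --- the second fundamental form $A$ and the off-$W$ torsion --- which is precisely the correction term in (\ref{subKa}). The paper instead treats the ambient identity (\ref{herm19b}) as a black box: it sandwiches it with $\pi_W$, writes $\pi_W[\Lambda,\d]\pi_W=[\Lambda_W,\d_W]+\pi_W\Lambda\pi_{W,\bot}\d\,\pi_W$ using $\pi_W\Lambda\pi_W=\Lambda\pi_W$, and then computes only the two zeroth-order ``leakage'' terms $\pi_W\Lambda\pi_{W,\bot}\d\,\pi_W$ and $\pi_W\overline{\mathcal{T}}^*\pi_W-\overline{\mT}_W^*$ from (\ref{cc1.27}) and (\ref{cc1.37}). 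The paper's route is shorter and never has to re-verify that the algebraic (torsion) part of the classical computation restricts formally to $W$-indices, a step your plan asserts but does not carry out; your route has the compensating merit of making transparent that the correction term measures exactly the failure of $\bar{\d}_W^*$ to be intrinsic to $(W,h^W,\nabla^W)$. Two cautions if you write yours up in full: the identification of the purely algebraic piece with $\sqrt{-1}\,\overline{\mT}_W^*$ is obtained in \cite{MM07} from the explicit formula for $\mathcal{T}$ (Lemma 1.4.10) rather than from the Jacobi identity, so that step needs the $n$-index analogue of that lemma; and the vanishing of the frame-derivative terms in $[\Lambda_W,\sum_j w^j\wedge\wi{\nabla}^W_{w_j}]$ should be justified by noting that the discrepancy is a zeroth-order operator and evaluating it at a point in a $\nabla^W$-parallel frame.
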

\begin{proof} Set $\pi_{W,\bot}= \Id - \pi_{W}$.
    By (\ref{herm19b}), we have 
    \begin{align}\label{cc1.31}
 \pi_{W}   \big[\Lambda,\partial\big]  \pi_{W}   =
    &\sqrt{-1}\big(\, \db^{*}_{W} 
    + \pi_{W}  \overline{\mathcal{T}}^*\pi_{W}   \, \big).
    \end{align}
    Note that 
    \begin{align}\label{cc1.32}
\pi_{W} \Lambda  \pi_{W} = \Lambda  \pi_{W}  .
\end{align}
From (\ref{cc1.32}), we know
\begin{multline}\label{cc1.34}
    \pi_{W}   \big[\Lambda,\partial\big]  \pi_{W} 
    =   \pi_{W}   \Lambda  \pi_{W} \partial  \pi_{W} 
    +   \pi_{W}   \Lambda  \pi_{W, \bot} \partial  \pi_{W} 
    -   \pi_{W}  \partial \pi_{W}   \Lambda  \pi_{W} \\
    = \big[\Lambda_{W},\partial_{W}\big] 
    +  \pi_{W}   \Lambda  \pi_{W, \bot} \partial  \pi_{W}.
\end{multline}
By (\ref{lm2.20c}) and (\ref{cc1.27}), we have
\begin{multline}\label{cc1.35}
    \pi_{W}   \Lambda  \pi_{W, \bot} \partial  \pi_{W}
    = - \sqrt{-1}  \pi_{W} 
    \sum_{\gamma=n+1}^{m}i_{\ov{w}_{\gamma}}i_{w_{\gamma}}
    \pi_{W, \bot} \partial  \pi_{W}\\
    = \sqrt{-1}  \sum_{\alpha=n+1}^{m}\sum_{j=1}^{n}
    \left\langle w_{j}, \ov{A(\ov{w}_{\alpha})w_{\alpha}}
    \right\rangle i_{\ov{w}_{j}} \pi_{W} \, . 
\end{multline}

By (\ref{herm15,1}), (\ref{lm2.20c}) and (\ref{cc1.27}), we have 
$\partial_{W}\Theta_{W}= \pi_{W}\partial \Theta\pi_{W}$. 
Thus similarly to (\ref{cc1.34}), we have 
\begin{align}\label{cc1.36}
    \pi_{W} [\Lambda, \partial \Theta]\pi_{W} =
\left[\Lambda_{W}, \partial_{W} \Theta_{W}\right]
+  \pi_{W}   \Lambda  \pi_{W, \bot} \partial \Theta \pi_{W}.
\end{align}
By \cite[(1.2.48), (1.2.54)]{MM07}, we have
\begin{align}\label{cc1.37}
    \partial\Theta =  \frac{\sqrt{-1}}{2} \sum_{i,j,k=1}^{m}
\langle T(w_i,w_j),\ov{w}_k\rangle w^i\wedge w^j
\wedge \ov{w}^k.
\end{align}
From (\ref{herm15,1}) and (\ref{cc1.37}), we know 
\begin{align}\label{cc1.38}
  \pi_{W}   \Lambda  \pi_{W, \bot} \partial \Theta \pi_{W}
= -\sum_{\alpha=n+1}^{m}\sum_{j=1}^{n}
\left\langle  T(w_{\alpha}, w_{j}), 
\ov{w}_{\alpha}\right\rangle w^{j} \pi_{W}    .
\end{align}
Taking the adjoint of  (\ref{cc1.36}), from 
(\ref{cc1.30}) and (\ref{cc1.38}),
we know 
\begin{align}\label{cc1.39}
    \pi_{W} \mathcal{T}^*\pi_{W} = \mT_W^\ast
-  \sum_{\alpha=n+1}^{m} \sum_{j=1}^{n}
\left\langle  T(\ov{w}_{\alpha}, \ov{w}_{j}), 
    w_{\alpha}\right\rangle  i_{w_{j}} \pi_{W} .
\end{align}
Thus
\begin{align}\label{cc1.40}
    \pi_{W} \overline{\mathcal{T}}^*\pi_{W} 
    = \overline{\mT}_W^\ast
- \sum_{\alpha=n+1}^{m} \sum_{j=1}^{n}
\left\langle T({w}_{\alpha}, {w}_{j}), 
    \ov{w}_{\alpha}\right\rangle  i_{\ov{w}_{j}} \pi_{W} .
\end{align}
Finally from  (\ref{cc1.31}), (\ref{cc1.34}), (\ref{cc1.35})
and  (\ref{cc1.40}), we get
\begin{multline}\label{cc1.41}
    [\Lambda_W,\d_W] =  \sqrt{-1}
    \left(\bar{\d}_W^\ast + \overline{\mT}_W^\ast\right)\\
 -   \sqrt{-1}  \sum_{\alpha=n+1}^{m}\sum_{j=1}^{n}
 \Big( \left\langle w_{j}, \ov{A(\ov{w}_{\alpha})w_{\alpha}}
 \right\rangle
+ \left\langle  T({w}_{\alpha}, {w}_{j}), 
     \ov{w}_{\alpha}\right\rangle  \Big)i_{\ov{w}_{j}}\pi_{W} .
\end{multline}
   From  \eqref{cc1.41}, we get \eqref{subKa}.
\end{proof}

%%%%%%%%%%%%%%%%%%%%%%%%%%%%%%%%%
\subsection{The proof of Theorem \ref{thmprin}}\label{s2.3}

Remember the construction of the holomorphic map 
$\varphi : F\widehat{\otimes} \Lambda^\bullet W^\ast 
\rightarrow \Lambda^{\bullet+2} W^\ast$ in 
Subsection \ref{subscompl}. 
There we assumed for simplicity that this map has constant rank. 
For the purpose of Theorem \ref{thmprin}, one can easily reduce
to this case. 
Indeed, there exists an analytic subset $V$ of $X$ 
such that $\varphi: F\widehat{\otimes}\Lambda^k W^* 
\to \Lambda^{k+2} W^*$ has maximum rank on $X\setminus V$ 
for any $k$. In particular, 
$\varphi(F\widehat{\otimes}\Lambda^k W^*)$ forms 
a vector subbundle of $\Lambda^{k+2} W^*$ on $X\setminus V$
for any $k$. We can define the vector bundles $F_\varphi$
and $F_{\varphi,\bot}$ on $X \setminus V$ as before. 
On $X \setminus V$, we have the decompositions (\ref{cc2.3b}) 
and (\ref{cc2.4}) for forms with compact support; in particular

\begin{align}\label{cc2.4b}\begin{split}
\mQ\cap \Omega^\bullet_{c}(X\setminus V) 
= \cC^\infty_{c} (X\setminus V, 
F_{\varphi,\bot}\widehat{\otimes} \ov{F}_{\varphi,\bot}).
    \end{split}\end{align}

As $X\setminus V$ is an open connected dense subset of $X$, 
the characteristic Laplacian $\Delta_\mQ$ 
preserves the bigrading on $\mQ$ if and only if it preserves 
the bigrading on $\mQ\cap \Omega^\bullet_{c}(X\setminus V)$. 
Thus we can work on $X \setminus V$ instead of $X$.
   
 From the above discussion, in the rest, we will assume that 
 $\varphi(F\widehat{\otimes}\Lambda^k W^*)$ forms a vector 
 subbundle of $\Lambda^{k+2} W^*$ on $X$ for any $k$.
Then we can use the formalism developed in 
Subsection \ref{subscompl}.

By (\ref{cc1.3}), (\ref{cc1.4}), as in (\ref{lm2.65}),  we have 
\begin{equation}\label{cc2.6}
\Delta_\mQ=\Big[\partial_\mQ+\db_\mQ,\partial^*_\mQ
+\db^*_\mQ\Big]
=\square_\mQ+\ov{\square}_\mQ
+ \left[\partial_\mQ,\db^*_\mQ\right]
+ \Big[\, \db_\mQ,\partial^*_\mQ\Big].
\end{equation}
As $\square_\mQ$, $\ov{\square}_\mQ$ preserve the bigrading 
on $\mQ$, and $[\partial_\mQ,\db^*_\mQ]: \mQ^{\bullet,\bullet}
\to \mQ^{\bullet+1,\bullet-1}$, 
we know that $\Delta_\mQ$ preserves the bigrading on $\mQ$
if and only if
\begin{equation}\label{cc2.7}
    \left[\partial_\mQ,\db^*_\mQ\right]=0.
\end{equation}

We would like to understand the %second oder differential 
operator $[\partial_\mQ,\db^*_\mQ]$.

For $f\in \cC^\infty(X)$,  by (\ref{cc1.17}), we have
\begin{align}\label{cc2.8}
   \partial_{W}f = \sum_{j=1}^n w_{j}(f) w^j\in W^*,
    \end{align}
where $\{w_{j}\}_{j=1}^m$ is an orthonormal frame of $T^{(1,0)}X$ 
such that $\{w_{j}\}_{j=1}^{n}$ is an orthonormal frame of $W$.
For $\xi\in T^*_{\R}X$, let $\xi^*\in T_{\R}X$ be the metric 
dual of $\xi$. In particular, if $\xi \in W^*$, then $\xi^*\in \ov{W}$.

Since $\mJ$ is stable by $d,\d,\bar{\d}$, as in  (\ref{cc1.5}), 
we have as maps on $\Omega(X)$,
\begin{align} \label{cc2.9}\begin{split}
&\pi_\mQ \circ \partial\circ \pi_\mQ = \pi_\mQ \circ \partial,\\
&\pi_\mQ \circ \bar{\d}\circ \pi_\mQ = \pi_\mQ \circ \bar{\d},
\quad \pi_\mQ\circ \bar{\d}^{*}\circ \pi_\mQ=\bar{\d}^{*}
\circ \pi_\mQ  .
\end{split}\end{align}

Let $h^{F_\varphi}, h^{F_{\varphi,\bot}}$ be the Hermitian metrics 
on $F_\varphi, F_{\varphi,\bot}$ induced by $h^{\Lambda W^*}$
on $\Lambda W^*$, which is induced by $h^W$. We recall that 
$F_\varphi $ is a holomorphic vector subbundle of $\Lambda W^*$. 
As in \eqref{cc1.20}, 
let $\nabla^{F_{\varphi}}$, $\nabla^{F_{\varphi,\bot}}$ be 
the Chern connections on 
$(F_\varphi, h^{F_{\varphi}})$, $(F_{\varphi,\bot}, 
h^{F_{\varphi,\bot}})$. 
Let  $\nabla^{\Lambda W^*}$ be the connection on 
$\Lambda W^*$ induced by 
$\nabla ^W$, then $\nabla^{\Lambda W^*}$ is the 
Chern connection on 
$(\Lambda W^*,  h^{\Lambda W^*})$. Set 
\begin{align}\label{cc2.11}
B= \nabla^{\Lambda W^*\,''}-  
({\nabla}^{F_{\varphi}\, ''}\oplus {\nabla}^{ F_{\varphi,\bot}\,''})
\in T^{*(0,1)}X\otimes \Hom (F_{\varphi,\bot},F_{\varphi}).
\end{align}
The adjoint $B^{*}$ of $B$ takes values in
 $T^{*(1,0)}X\otimes\Hom (F_{\varphi},F_{\varphi,\bot})$. 
  Then under the decomposition 
$\Lambda W^*= F_{\varphi}\oplus F_{\varphi,\bot}$, we have
\begin{align}\label{cc2.12}
\nabla^{\Lambda W^*}= \begin{pmatrix}{\nabla}^{F_{\varphi}} & B\\ 
	-B^{*}& {\nabla}^{F_{\varphi,\bot}} \end{pmatrix}.
\end{align}
We denote also $\pi_{\mQ}$ the orthogonal projection from
$\Lambda W^*\widehat{\otimes}\Lambda\ov{W}^*$
onto $F_{\varphi,\bot}\widehat{\otimes}\ov{F}_{\varphi,\bot}$,
and $\pi_{\mQ}^\bot= \Id_{\Lambda W^*\widehat{\otimes}
\Lambda\ov{W}^*}-\pi_{\mQ}$. Then
\begin{align} \label{cc2.9a}\begin{split}
&\pi_\mQ =  \pi_{F_{\varphi,\bot}}
\otimes \ov{\pi}_{F_{\varphi,\bot}},\\
&\pi_{\mQ}^\bot=  \pi_{F_{\varphi}}
\otimes \ov{\pi}_{F_{\varphi,\bot}}
+ \pi_{F_{\varphi,\bot}}\otimes \ov{\pi}_{F_{\varphi}}
+\pi_{F_{\varphi}}
\otimes \ov{\pi}_{F_{\varphi}} .
\end{split}\end{align}

\begin{lemma}\label{cct2.6}
The operator $[\partial_\mQ,\db^*_\mQ]$ is a first order 
differential operator acting on 
$F_{\varphi,\bot}\widehat{\otimes}\ov{F}_{\varphi,\bot}$.
 Its principal symbol, evaluated on $\xi \in T^\ast X$, is $\sqrt{-1}$ times 
\begin{multline}\label{cc2.14}
 \pi_{\mQ}\sum_{j,k=1}^{n}  i_{\ov{w}_j}  w^{k} 
 \wedge   \Big[ (\xi_W,  T(w_{j},w_{k})) 
  - (\xi, \pi_{N}[w_{j},w_{k}]) \Big]\pi_{\mQ}\\
+  \sum_{j=1}^{n}  (B^*(w_j) \pi_{F_{\varphi}}\xi_W ) 
\widehat{\otimes} i_{\ov{w}_j} 
-  \pi_{\mQ} \sum_{j=1}^{n} 
w^j \widehat{\otimes} i_{(\xi_W)^*}  \ov{B(\ov{w_j})},
\end{multline} 
where $\xi_W$ is the orthogonal projection of $\xi$ on $W^\ast$
\end{lemma}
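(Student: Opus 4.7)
I would prove the lemma by computing the symbol of $[\partial_\mQ, \db_\mQ^\ast]$ directly, using the formulas (\ref{cc1.18a}) together with the projection identities (\ref{cc2.9}). For $\alpha \in \mQ$, the idea is to first rewrite
\begin{equation*}
[\partial_\mQ, \db_\mQ^\ast]\alpha
= \pi_\mQ[\partial, \db^\ast]\alpha - \pi_\mQ \db^\ast \pi_\mJ \partial\alpha,
\end{equation*}
which follows from $\partial_\mQ = \pi_\mQ \partial$, $\db_\mQ^\ast = \db^\ast\pi_\mQ$ (on $\mQ$), and $\Id = \pi_\mQ + \pi_\mJ$. The first term is already at most first order, since the generalized K\"ahler identity \eqref{herm19b} together with $\partial^2=0$ and the Jacobi identity \eqref{jacobi} give $[\partial, \db^\ast] = -[\partial, \overline{\mT}^\ast]$, as in the proof of Theorem \ref{almt2.3}. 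Thus the only possible second-order contribution comes from $-\pi_\mQ \db^\ast \pi_\mJ \partial$.

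Next I would compute the naive principal symbol of this remaining term, namely $-\pi_\mQ\, i_{(\xi^{(0,1)})^\ast}\,\pi_\mJ\,\xi^{(1,0)}\wedge$. Decomposing $\xi = \xi_W + \xi_N$ and using that $\xi_N\in F$ so that $\xi_N^{(1,0)}\wedge\alpha \in \mI\subset\mJ$, one sees that the $\xi_N$--contributions to $\pi_\mJ\xi^{(1,0)}\wedge$ are just $\xi_N^{(1,0)}\wedge$ itself, while the $\xi_W$--contributions pick up only the $F_\varphi$--component, i.e.\ $\pi_{F_\varphi}\xi_W^{(1,0)}\wedge$. Using the anticommutator identities $\{w^j\wedge, i_{\ov w_k}\}=0$ and $\{i_{\ov w_j}, w^k\wedge\}=\delta_{jk}$ and comparing with the corresponding second-order piece hidden inside $\pi_\mQ[\partial, \db^\ast]$, the second-order parts cancel. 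This is the structural analog of the K\"ahler identity argument (cf.\ \eqref{herm22}), now localized to $\mQ$.

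The remaining first-order symbol then splits into four groups, corresponding exactly to the four contributions in \eqref{cc2.14}. The $\pi_\mQ\, i_{\ov w_j}\, w^k\wedge (\xi_W, T(w_j, w_k))\,\pi_\mQ$ term arises from the Hermitian torsion terms in $\db^\ast$ visible in \eqref{cc1.26}, which survive after projection onto $W$ (cf.\ \eqref{cc1.29}). The term $-\pi_\mQ\, i_{\ov w_j}\, w^k\wedge (\xi, \pi_N[w_j, w_k])\,\pi_\mQ$ is produced by the $F_\varphi$--projection in $\pi_\mJ\,\partial$: indeed, $\partial\alpha$ generates $d\theta$--type pieces, and by \eqref{cc2.2} these are controlled by $\theta([v,w]) = -\varphi(\theta)(v,w)$, which measures precisely the $N$--component of $[w_j, w_k]$. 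Finally, the two $B$--terms come from moving $\pi_{F_{\varphi,\bot}}$ (and its conjugate) past the Chern derivative on $\Lambda W^\ast$: by \eqref{cc2.12}, the difference $\nabla^{\Lambda W^\ast}-(\nabla^{F_\varphi}\oplus\nabla^{F_{\varphi,\bot}})$ is $B-B^\ast$, and the first-order symbol of this commutator contributes $B^\ast(w_j)\pi_{F_\varphi}\xi_W$ on the holomorphic side and $i_{(\xi_W)^\ast}\ov{B(\ov w_j)}$ on the antiholomorphic side.

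The main obstacle is the bookkeeping: one must show that no unwanted second-order terms survive and that the various first-order contributions organize themselves into the four distinct pieces of the stated symbol, rather than mixing. Working in a local orthonormal holomorphic frame $\{w_j\}_{j=1}^m$ adapted to the splitting $T^{(1,0)}X = W\oplus N$, and separating at each step the coefficients of $i_{\ov w_j} w^k\wedge$ (responsible for the torsion and non-involutivity contributions) from the creation/annihilation operators produced by the connection defect $B$ (responsible for the last two terms), makes this separation transparent; the verification then reduces to the identity $\varphi(\theta)(v,w) = -\theta([v,w])$ for the Pfaffian piece and to the definition of $B$ in \eqref{cc2.11} for the connection piece.
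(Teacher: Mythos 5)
Your overall strategy is the same as the paper's: show that the second order symbol of $[\partial_\mQ,\db^*_\mQ]$ vanishes using the bidegree structure of $\mQ$, then extract the first order symbol from commutators with the connection/torsion data and from the connection defect $B$ of the splitting $\Lambda W^*=F_\varphi\oplus F_{\varphi,\bot}$. But as written the argument has a genuine gap at its central step and a misattribution that would derail the computation. First, the vanishing of the second order part. Your two ``anticommutator identities'' contradict each other (an anticommutator is symmetric, so $\{w^j\wedge,i_{\ov w_k}\}$ cannot be both $0$ and $\delta_{jk}$; the correct statements are $\{w^j\wedge,i_{\ov w_k}\}=0$ and $\{\ov w^j\wedge,i_{\ov w_k}\}=\delta_{jk}$), and the phrase ``comparing with the second-order piece hidden inside $\pi_\mQ[\partial,\db^*]$'' is empty: that operator is first order outright, so the second order symbol of $-\pi_\mQ\db^*\pi_\mJ\partial\pi_\mQ$ must vanish \emph{on its own}. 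The one nontrivial input needed for this is the paper's identity \eqref{cc2.18}, $\pi_\mQ\, i_{\xi^*}\,\pi_\mQ^\bot\,\xi\wedge\pi_\mQ=0$ for $\xi\in W^*$, which holds because $i_{\xi^*}$ (with $\xi^*\in\ov W$) acts only on the antiholomorphic factor and therefore preserves the $F_\varphi$-component created by $\pi_\mQ^\bot\,\xi\wedge$, so the result stays orthogonal to $\mQ=F_{\varphi,\bot}\widehat\otimes\ov F_{\varphi,\bot}$. You identify the relevant projection $\pi_{F_\varphi}\xi_W\wedge$ but never give this argument; without it the lemma's first assertion is not proved.

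Second, your accounting of the first order terms does not match how they actually arise, and following it would produce the wrong coefficients. The term $(\xi,\pi_N[w_j,w_k])$ carries the \emph{full} covector $\xi$, including its $N^*$-component; it cannot come from the $F_\varphi$-projection inside $\pi_\mJ$, which only manipulates $\Lambda W^*$. In the paper it arises from the antisymmetrized second derivatives $w_jw_k(f)-w_kw_jf=[w_j,w_k](f)=df([w_j,w_k])$ appearing in the commutators $[\db_W^*,\partial_Wf\wedge]$ and $[\partial_W,i_{(\partial_Wf)^*}]$ (see \eqref{cc2.25}--\eqref{cc2.27}); the $W$-part of the bracket recombines with the connection coefficients into $(\xi_W,T(w_j,w_k))$ and the $N$-part survives as $(\xi,\pi_N[w_j,w_k])$. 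The $F_\varphi$-projection cross terms $\pi_\mQ\db_W^*\pi_\mQ^\bot\,\partial_Wf\wedge\pi_\mQ$ and $\pi_\mQ i_{(\partial_Wf)^*}\pi_\mQ^\bot\partial_W\pi_\mQ$ produce \emph{only} the two $B$-terms (\eqref{cc2.32}--\eqref{cc2.33}); your scheme assigns both the bracket term and the $B$-terms to this same source, which double counts. Likewise the torsion term does not come from the zeroth order Hermitian torsion terms of $\db^*$ in \eqref{cc1.26}: those operators live entirely on the antiholomorphic factor and supercommute to zero with $\partial_Wf\wedge$. To repair the proof you should compute, as the paper does, the $t$-linear coefficient of $e^{-itf}[\partial_\mQ,\db^*_\mQ]e^{itf}$ as $i([\,\pi_\mQ\partial_Wf\wedge\pi_\mQ,\db^*_\mQ]+[\partial_\mQ,\pi_\mQ i_{(\partial_Wf)^*}\pi_\mQ])$ and evaluate it with the explicit formulas \eqref{cc1.25}, \eqref{cc1.29} and \eqref{cc2.12}.
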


\begin{rem}\label{cct2.8}
Theorem \ref{thmprin} is an easy corollary of (\ref{cc2.14}).
Indeed, if we take $\xi$ a holomorphic one-form which
is orthogonal to $W^\ast$, the principal symbol is 
\begin{align*}
-\sqrt{-1}  \pi_{\mQ}\sum_{j,k=1}^{n}  
i_{\ov{w}_j}  w^{k} \wedge (\xi, \pi_{N}[w_{j},w_{k}]) 
\pi_{\mQ}.
\end{align*}

Evaluating at $\ov{w}^j$, which belongs to $F_{\varphi,\bot}$, 
one gets
\begin{align*}
\sqrt{-1} \sum_{k=1}^n (\xi, \pi_{N}[w_{j},w_{k}]) w^{k}.
\end{align*}
This term vanishes for any $j$ and $\xi$ if and only 
if the distribution is involutive, which shows Theorem \ref{thmprin}.
\end{rem}

\begin{proof}[Proof of Lemma \ref{cct2.6}]  
 Note that for $\xi\in W^{*}$, 
 $\psi\in F_{\varphi,\bot}\widehat{\otimes} \ov{F}_{\varphi,\bot}$,
 by (\ref{cc2.3}), (\ref{cc2.10}) and (\ref{cc2.9a}),  we know that 
 $ \pi_{\mQ}^{\bot} (\xi\wedge \psi)\in 
 F_{\varphi}\widehat{\otimes} \ov{F}_{\varphi,\bot}$.
 Thus by  (\ref{cc2.10}), $ i_{\xi^{*}}\pi_{\mQ}^{\bot} (\xi\wedge \psi)\in 
 F_{\varphi}\widehat{\otimes} \ov{F}_{\varphi,\bot}$,
as $\xi^{*}\in \ov{W}$,  and this implies
 \begin{align}\label{cc2.18}
     \pi_{\mQ} i_{\xi^*}  \pi_{\mQ}^{\bot} \xi\wedge 
     \pi_{\mQ} =0 
\quad \text{  for any  }  \xi\in W^{*}	  .
\end{align}

 We compute now the principal symbol of 
 $[\partial_\mQ,\db^*_\mQ]$
 by computing the asymptotics of
 $e^{-i t f} [\partial_\mQ,\db^*_\mQ]e^{i t f}$ when 
 $t\to + \infty$ for $f\in \cC^\infty(X)$.
 By (\ref{cc1.18a}),  (\ref{cc1.25}) and (\ref{cc1.29}), we have first
    \begin{align} \label{cc2.19}
    \begin{split}
  e^{-i t f} \partial_\mQ e^{i t f}  =   i t\, 
\pi_{\mQ}  \partial_{W}f \wedge \pi_{\mQ}   
  + \partial_\mQ,\\
 e^{-i t f} \db^*_\mQ e^{i t f} = - i t \, 
\pi_{\mQ}i_{(\partial_{W}f )^*}  \pi_{\mQ}   
+  \db^*_\mQ.
    \end{split}
    \end{align}

Thus from (\ref{cc2.10}),  (\ref{cc2.19}), the principal symbol of 
$[\partial_\mQ,\db^*_\mQ]$ as a second order differential 
operator is $\lim_{t\to\infty} t^{-2} e^{-i t f} 
[\partial_\mQ,\db^*_\mQ]e^{i t f}$, that is 
    \begin{multline} \label{cc2.20}
  \Big[i  \pi_{\mQ}\partial_{W}f 
  \wedge \pi_{\mQ}\, , 
 - i  \pi_{\mQ}i_{(\partial_{W}f )^*}
 \pi_{\mQ}\Big]\\
  = \pi_{\mQ}\partial_{W}f \wedge  
  i_{(\partial_{W}f )^*}     \pi_{\mQ}
  + \pi_{\mQ} i_{(\partial_{W}f )^*}  
 \pi_{\mQ}\partial_{W}f \wedge  
  \pi_{\mQ}\\
 =-\pi_{\mQ} i_{(\partial_{W}f )^*}  
 \pi_{\mQ}^{\bot} \partial_{W}f \wedge \pi_{\mQ} =0,
    \end{multline}
%From (\ref{cc2.17}) and  (\ref{cc2.18}), we know that the principal 
%symbol of $[\partial_\mQ,\db^*_\mQ]$ as a second order differential 
%    operator is zero. 
 here we use  (\ref{cc2.18})  in the last equality.
 The equation  (\ref{cc2.20})  means that 
 $[\partial_\mQ,\db^*_\mQ]$ is a first order differential operator.
    
   By  (\ref{cc2.19}), the principal symbol of 
 $[\partial_\mQ,\db^*_\mQ]$ as a first order differential operator is
 $\lim_{t\to \infty} t^{-1} e^{-i t f} 
 [\partial_\mQ,\db^*_\mQ]e^{i t f}$, that is 
  \begin{align}\label{cc2.21}
i\left[\pi_{\mQ}\partial_{W}f \wedge  
\pi_{\mQ},    \db^*_\mQ\right]
-i\left[  \partial_\mQ, \pi_{\mQ}i_{(\partial_{W}f )^*}  
\pi_{\mQ} \right].
\end{align}

By (\ref{cc2.10}) and (\ref{cc1.18a}),  we get
\begin{multline}\label{cc2.22}
    \left[\pi_{\mQ}\partial_{W}f \wedge  
    \pi_{\mQ},    \db^*_\mQ\right]
    = \pi_{\mQ}\partial_{W}f \wedge  
    \bar{\d}_W^\ast \pi_{\mQ}
    +  \pi_{\mQ}\bar{\d}_W^\ast \pi_{\mQ}
    \partial_{W}f \wedge   \pi_{\mQ}\\
    =  \pi_{\mQ} \bar{\d}_W^\ast (\partial_{W}f) 
    \pi_{\mQ} - \pi_{\mQ}\bar{\d}_W^\ast 
    \pi_{\mQ}^{\bot}\partial_{W}f \wedge   \pi_{\mQ}.
\end{multline}
Again by (\ref{cc2.10}) and (\ref{cc1.18a}),  we get
\begin{multline}\label{cc2.24}
-    \left[   \partial_\mQ, \pi_{\mQ}i_{(\partial_{W}f )^*}  \wedge  
    \pi_{\mQ}\right]\\
    = -\pi_{\mQ}i_{(\partial_{W}f )^*} 
    \pi_{\mQ}\d_W\pi_{\mQ}
    -\pi_{\mQ}\d_W i_{(\partial_{W}f )^*} 
      \pi_{\mQ}\\
    =   - \pi_{\mQ}  \d_W( i_{(\partial_{W}f )^*} ) 
    \wedge \pi_{\mQ} 
 + \pi_{\mQ} i_{(\partial_{W}f )^*}  \pi_{\mQ}^{\bot}
    \d_W\pi_{\mQ}.
\end{multline}

By  (\ref{cc1.29}) and  (\ref{cc2.8}), we get
\begin{multline}\label{cc2.25}
    \bar{\d}_W^\ast (\partial_{W}f) 
 = [\bar{\d}_W^\ast,  \partial_{W}f]
  =- \left[\sum_{j=1}^{n}  i_{\ov{w}_j} \wi{\nabla}^{W}_{w_j},  
  \partial_{W}f\right]\\
  = -\sum_{j,k=1}^{n}  i_{\ov{w}_j}  \Big[w_{j}(w_{k}(f))  w^{k} 
  + w_{k}(f)   \wi{\nabla}^{W}_{w_j}w^{k} \Big]\\
  =-\sum_{j,k=1}^{n}  i_{\ov{w}_j}  w^{k} 
  \left[w_{j}(w_{k}(f))- (\partial_{W}f, \nabla^{W}_{w_j}w_{k})\right].
\end{multline}
By   (\ref{cc1.25}) and  (\ref{cc2.8}), we get
\begin{multline}\label{cc2.26}
  -  \d_W( i_{(\partial_{W}f )^*} )   
 = -[ \d_W,  i_{(\partial_{W}f )^*} ]
  =- \left[\sum_{k=1}^{n}  w^{k} \wi{\nabla}^{W}_{w_k},  
  i_{(\partial_{W}f )^*} \right]\\
  = -\sum_{j,k=1}^{n}  w^{k} \wedge 
\Big[  w_{k}(w_{j}(f))   i_{\ov{w}_j} 
  + w_{j}(f)   i_{\nabla^{W}_{w_k}\ov{w}_{j}}\ \Big]\\
  =\sum_{j,k=1}^{n}  i_{\ov{w}_j}  w^{k} 
  \Big[w_{k}(w_{j}(f))- (\partial_{W}f, \nabla^{W}_{w_k}w_{j})\Big].
\end{multline}
By  (\ref{cc1.22}),  (\ref{cc2.25}) and  (\ref{cc2.26}), we get
\begin{multline}\label{cc2.27}
 \bar{\d}_W^\ast (\partial_{W}f) -  \d_W( i_{(\partial_{W}f )^*})\\   
  =\sum_{j,k=1}^{n}  i_{\ov{w}_j}  w^{k} 
  \left[ (\partial f, - [w_{j},w_{k}]) 
  + (\partial_{W}f, 
  \nabla^{W}_{w_j}w_{k}-\nabla^{W}_{w_k}w_{j})\right]\\
  =\sum_{j,k=1}^{n}  i_{\ov{w}_j}  w^{k} 
  \Big[ (\partial_{W}f,  T(w_{j},w_{k})) 
  - (\partial f, \pi_{N}[w_{j},w_{k}]) \Big].
\end{multline}
From   (\ref{cc2.21})--(\ref{cc2.27}),  we know that the principal 
symbol of the first order differential operator 
$[\partial_\mQ,\db^*_\mQ]$ is $\sqrt{-1}$ times 
 \begin{multline}\label{cc2.28}
 \pi_{\mQ}\sum_{j,k=1}^{n}  i_{\ov{w}_j}  w^{k} 
  \left[ (\partial_{W}f,  T(w_{j},w_{k})) 
  - (\partial f, \pi_{N}[w_{j},w_{k}]) \right]\pi_{\mQ}\\
 - \pi_{\mQ}\bar{\d}_W^\ast 
    \pi_{\mQ}^{\bot}\partial_{W}f \wedge   \pi_{\mQ}
  + \pi_{\mQ} i_{(\partial_{W}f )^*}  \pi_{\mQ}^{\bot}
    \d_W\pi_{\mQ} .
\end{multline} 

Now by  \eqref{cc2.9a}, 
$ \pi_{\mQ}^{\bot}\partial_{W}f \wedge   
\pi_{\mQ}\subset
F_{\varphi}\otimes\ov{F}_{\varphi,\bot}$. By
\eqref{cc2.10},  \eqref{cc1.29} and  \eqref{cc2.12}, 
 we know that
\begin{multline}\label{cc2.32}
\pi_{\mQ} \bar{\d}_W^\ast \pi_{\mQ}^{\bot}
\partial_{W}f \wedge \pi_{\mQ} 
= \pi_{\mQ} \sum_{j=1}^{n} 
\Big(-  i_{\ov{w}_j}\wi{\nabla}^{W}_{w_j}\Big)
\pi_{\mQ}^{\bot}\partial_{W}f  \wedge\pi_{\mQ} \\
= \pi_{\mQ} \sum_{j=1}^{n} 
\Big(  i_{\ov{w}_j} B^*(w_j)\otimes 1\Big)\pi_{\mQ}^{\bot}
\partial_{W}f \wedge \pi_{\mQ} 
=- \sum_{j=1}^{n}  (B^*(w_j) \pi_{F_{\varphi}}
\partial_{W}f \wedge) \widehat{\otimes} i_{\ov{w}_j} .
\end{multline}
Let $P$ be the orthogonal projection from 
$\Lambda^{\bullet,\bullet}(W^{*}_{\R})$ onto
$F_{\varphi,\bot}\otimes \ov{F}_{\varphi}$.
Note that $ \pi_{\mQ}^{\bot}\partial_{W}  
\pi_{\mQ} \subset F_\varphi\otimes 
\Lambda \ov{W}^* \oplus F_{\varphi,\bot}\otimes 
\ov{F}_{\varphi}$,
as $ i_{(\partial_{W}f )^*} F_\varphi\otimes 
\Lambda \ov{W}^* \subset F_\varphi\otimes \Lambda \ov{W}^*$,  
from \eqref{cc1.25}, \eqref{cc2.12} and \eqref{cc2.9a},  we have also 
\begin{multline}\label{cc2.33}
\pi_{\mQ} i_{(\partial_{W}f )^*}  
\pi_{\mQ}^{\bot}\partial_{W}  \pi_{\mQ} 
=\pi_{\mQ} i_{(\partial_{W}f )^*}  
P \partial_{W}  \pi_{\mQ} \\
= \pi_{\mQ} i_{(\partial_{W}f )^*}  
P  \sum_{j=1}^{n} w^j\wedge \wi{\nabla}^{W}_{w_{j}}\pi_{\mQ} 
=-  \pi_{\mQ} \sum_{j=1}^{n} 
w^j \widehat{\otimes} i_{(\partial_{W}f )^*}  \ov{B(\ov{w_j})}.
\end{multline}
The proof of Lemma \ref{cct2.6} is completed.
\end{proof}

%\bibliographystyle{amsplain}
%\bibliography{Bismut2,Others2,Ma}

\def\cprime{$'$} \def\cprime{$'$}
\providecommand{\bysame}{\leavevmode\hbox to3em{\hrulefill}\thinspace}
\providecommand{\MR}{\relax\ifhmode\unskip\space\fi MR }
% \MRhref is called by the amsart/book/proc definition of \MR.
\providecommand{\MRhref}[2]{%
  \href{http://www.ams.org/mathscinet-getitem?mr=#1}{#2}
}
\providecommand{\href}[2]{#2}

\end{document}